\algnewcommand{\Initialize}[1]{
  \State \textbf{Initialize:}
  \Statex \hspace*{\algorithmicindent}\parbox[t]{.8\linewidth}{\raggedright #1}
}
\algnewcommand{\Indent}[2]{
  \State {#1}
  \vspace{-2mm}
  \Statex \hspace*{\algorithmicindent}\parbox[t]{.9\linewidth}{\raggedright #2}
}
\renewcommand{\Comment}[2][.5\linewidth]{%
  \leavevmode\hfill\makebox[#1][l]{$\triangleright$~#2}}
\newcommand{\vect}[1]{\boldsymbol{#1}}
\newcommand{\vectt}[1]{\boldsymbol{\mathbf{#1}}}
\newcommand{\dx}{\mathrm{d}x}
\newcommand{\divv}[1]{\nabla \cdot #1}
\newcommand{\lsb}{[\![}
\newcommand{\rsb}{]\!]}
\newcommand{\lcb}{\{\!\!\{}
\newcommand{\rcb}{\}\!\!\}}
\newcommand{\DG}{\mathrm{DG}}
\newcommand{\tens}[1]{\pmb{\mathsf{#1}}}
\newcommand{\Hdiv}{\vect{H}(\mathrm{div}; \Omega)}
\newcommand{\Hcurl}{\vect{H}(\mathrm{curl}; \Omega)}
\newcommand{\tA}{\tens{A}}
\newcommand{\tB}{\tens{B}}
\newcommand{\tC}{\tens{C}}
\newcommand{\tD}{\tens{D}}
\newcommand{\tE}{\tens{E}}
\newcommand{\tH}{\tens{H}}
\newcommand{\tS}{\tens{S}}
\newcommand{\tM}{\tens{M}}
\newcommand{\tzero}{\tens{0}}
\newcommand{\vf}{\vectt{f}}
\definecolor{babyblue}{rgb}{0.54, 0.81, 0.94}
\definecolor{classicrose}{rgb}{0.98, 0.8, 0.91}
\crefname{hypothesis}{Hypothesis}{Hypotheses}
\title{Preconditioners for computing multiple solutions in three-dimensional fluid topology optimization \thanks{Submitted DATE.
\funding{This manuscript was prepared whilst the first author was at the University of Oxford and was supported by the EPSRC Centre for Doctoral Training in Partial Differential Equations: Analysis and Applications [grant number  EP/L015811/1] and The MathWorks, Inc. Revisions were supported by the EPSRC grant ``Spectral element methods for fractional differential equations, with applications in applied analysis and medical imaging" [grant number EP/T022132/1] and the Leverhulme Trust Research Project ``Constructive approximation theory on and inside algebraic curves and surfaces" [grant number RPG-2019-144]. The second author is supported by the Engineering and Physical Sciences Research Council [grant numbers EP/R029423/1 and EP/W026163/1].}}}
\author{Ioannis P.~A.~Papadopoulos\thanks{Department of Mathematics, Imperial College London, London, UK \newline  \hspace*{4mm} (\email{ioannis.papadopoulos13@imperial.ac.uk}).}
\and Patrick E.~Farrell\thanks{Mathematical Institute, University of Oxford, Oxford, UK (\email{patrick.farrell@maths.ox.ac.uk}).}}
\begin{document}

\maketitle

\begin{abstract}
Topology optimization problems generally support multiple local minima, and real-world applications are typically three-dimensional. In previous work [I.~P.~A.~Papadopoulos, P.~E.~Farrell, and T.~M.~Surowiec, \emph{Computing multiple solutions of topology optimization problems}, SIAM Journal on Scientific Computing, (2021)], the authors developed the deflated barrier method, an algorithm that can systematically compute multiple solutions of topology optimization problems. In this work we develop preconditioners for the linear systems arising in the application of this method to Stokes flow, making it practical for use in three dimensions. In particular, we develop a nested block preconditioning approach which reduces the linear systems to solving two symmetric positive-definite matrices and an augmented momentum block. An augmented Lagrangian term is used to control the innermost Schur complement and we apply a geometric multigrid method with a kernel-capturing relaxation method for the augmented momentum block. We present multiple solutions in three-dimensional examples computed using the proposed iterative solver.
\end{abstract}

\begin{keywords}
topology optimization, multiple solutions, deflated barrier method, preconditioning, multigrid
\end{keywords}

\begin{AMS}
65F08, 65K10, 65N55, 35Q35, 49M15, 90C26
\end{AMS}

\section{Introduction}
Topology optimization has become a prominent tool in engineering design.
The objective is to find the optimal distribution of a continuum within a given domain that minimizes a problem-specific cost functional, without prior knowledge of the shape or topology of the solution \cite{Bendsoe2004}.

Topology optimization problems are generally nonconvex and can support multiple local minima. A major challenge in topology optimization is to identify multiple local minimizers, so that the best one (in performance, manufacturability, or aesthetics) may be chosen. Continuation in model parameters is often used to promote convergence to ``better'' local minima. However, Stolpe and Svanberg \cite{Stolpe2001} show that such methods can be ineffective even on the simplest of topology optimization models.

In recent work, Papadopoulos et al.~\cite{Papadopoulos2021a} developed an algorithm, called the deflated barrier method, that can systematically discover \emph{multiple} stationary points of topology optimization problems formulated using a density approach.
The deflated barrier method combines the techniques of barrier methods \cite{Fiacco1990, Forsgren2002, Frisch1955, Schiela2007, Schiela2008, Ulbrich2009, Weiser2008}, primal-dual active set solvers \cite{Benson2003}, and deflation \cite{Farrell2015, Farrell2019}. In one example \cite[Fig.~1]{Papadopoulos2021a}, the authors found 42 stationary points of a fluid topology optimization problem constrained by the Navier--Stokes equations in a rectangular domain with five small holes removed. However, an application of the deflated barrier method requires the solution of many linear systems similar to those solved in an all-at-once (simultaneous analysis and design, SAND) approach. In~\cite{Papadopoulos2021a}, a sparse LU factorization was used to solve these linear systems, which is not generally affordable for three-dimensional problems.

In this work, we develop preconditioners for the linear systems arising in the deflated barrier method when applied to the model proposed by Borrvall and Petersson \cite{Borrvall2003}. The goal of this model is to minimize the power dissipation of a fluid in Stokes flow, subject to a volume constraint restricting the proportion of the domain that the fluid can occupy. The preconditioners we develop make it feasible to identify multiple solutions to the Borrvall--Petersson problem in three dimensions with the deflated barrier method.
A number of strategies have been proposed for the solution of three-dimensional topology optimization of fluid flow in other contexts \cite{Aage2013, Aage2007,Alexandersen2020,Behrou2019,Challis2009, Deng2013,Evgrafov2014,Jensen2018,Pingen2007,Sa2016,Yaji2014,Zhou2008}.

We choose a discontinuous Galerkin $H(\mathrm{div})$-conforming finite element discretization for the velocity and pressure pair \cite{Brezzi1987,Brezzi1985} due to its pressure-robustness \cite{John2017} and its crisp characterization of the kernel of the divergence operator. We will show that block preconditioning can reduce the linear systems arising in the deflated barrier method to ones that resemble the systems arising in the discretization of the Stokes--Brinkman equations \cite{Evgrafov2014}. Then, we apply modern block preconditioning, pioneered by Wathen and coworkers \cite{Elman2014, Stoll2009, Wathen2015} to solve the inner linear systems. The inner linear systems are themselves solved with block preconditioning, using an augmented Lagrangian term to control the innermost Schur complement \cite{Fortin1983}. Finally, we develop a geometric multigrid method for the augmented momentum block with a vertex-star patch relaxation that captures the kernel of the augmented Lagrangian term \cite{pcpatch, Farrell2019a, Hong2016,Schoberl1999}. The multigrid scheme also requires a characterization of the active set on all levels of the mesh hierarchy, which we discuss.

We introduce the Borrvall--Petersson model in \cref{sec:BP} and the deflated barrier method in \cref{sec:dab}. In \cref{sec:discretization} we detail the discontinuous Galerkin finite element discretization and derive the linear systems that arise in the subproblems of the deflated barrier method. In \cref{sec:preconditioning} we develop the preconditioning strategies for these linear systems. In \cref{sec:examples} we investigate a number of two- and three-dimensional examples. All the examples support multiple solutions which are discovered by the deflated barrier method, and solved with our proposed iterative methods.

\section{Topology optimization of Stokes flow}
\label{sec:BP}
Given a volume constraint on a fluid in a fixed Lipschitz domain $\Omega \subset \mathbb{R}^d$, $d \in \{2, 3\}$, the Borrvall--Petersson model attempts to minimize the energy lost by the flow due to viscous dissipation, whilst maximizing the flow velocities at the applied body force. More precisely, the objective is to find $(\vect{u},\rho) \in H^1_{\vect{g},\mathrm{div}}(\Omega)^d \times C_\gamma$ that minimizes
\begin{align}
J(\vect{u},\rho) \coloneqq\frac{1}{2} \int_\Omega \left(\alpha(\rho) |\vect{u}|^2 + \nu |\nabla \vect{u}|^2 - 2\vect{f} \cdot \vect{u}\right)  \text{d}x,\label{borrvallmin} \tag{BP}
\end{align}
where $\vect{u}: \Omega \to \mathbb{R}^d$ denotes the velocity of the fluid, $\rho: \Omega \to \mathbb{R}$ is the material distribution of the fluid and
\begin{align*}
H^1_{\vect{g}}(\Omega)^d &\coloneqq \{\vect{v} \in H^1(\Omega)^d: \vect{v}|_{\partial \Omega} = \vect{g} \},\\
H^1_{\vect{g},\mathrm{div}}(\Omega)^d &\coloneqq \{\vect{v} \in H^1_{\vect{g}}(\Omega)^d : \mathrm{div}(\vect{v}) = 0 \; \text{a.e.\ in} \; \Omega \},\\
C_{\gamma} &\coloneqq \left\{ \eta \in L^\infty(\Omega) : 0 \leq \eta \leq 1 \; \text{a.e.}, \;\; \int_\Omega \eta \; \text{d}x \leq \gamma |\Omega| \right \}.
\end{align*}
Here, $H^1(\Omega)$ denotes the Sobolev space $W^{1,2}(\Omega)$, equipped with the inner product
\begin{align}
(u, v)_{H^1(\Omega)} = \int_{\Omega}uv + \nabla u \cdot \nabla v \, \dx,
\end{align}
which induces the norm $\| v \|_{H^1(\Omega)} \coloneqq (v,v)_{H^1(\Omega)}^{1/2}$. $L^\infty(\Omega)$ denotes the vector space of essentially bounded measurable functions equipped with the essential supremum norm, i.e.~$\|\eta\|_{L^\infty(\Omega)} \coloneqq \inf\{c \geq 0 : |\eta(x)| \leq c \; \text{for a.e.} \; x \in \Omega\}$. Furthermore, $\vect{f} \in L^2(\Omega)^d$ is a body force, $\nu > 0$ is the (constant) viscosity, and $\gamma \in (0,1)$ is the volume fraction. The restriction, $|_{\partial \Omega}$, is to be understood in the boundary trace sense \cite[Ch.~5.5]{Evans2010}. Moreover, the boundary data $\vect{g} \in H^{1/2}(\partial \Omega)^d$ and $\vect{g} = \vect{0} $ on $\Gamma \subset \partial \Omega$, with $\mathcal{H}^{d-1}(\Gamma)>0$, i.e.\ $\Gamma$ has nonzero Hausdorff measure on the boundary. Borrvall and Petersson introduced the inverse permeability term, $\alpha$, which models the influence of the material distribution on the flow. For values of $\rho$ close to one, $\alpha(\rho)$ is small, permitting fluid flow; for values of $\rho$ close to zero, $\alpha(\rho)$ is very large, restricting fluid flow. The function $\alpha$ satisfies the following properties:
\begin{enumerate}[label=({A}\arabic*)]
\item $\alpha: [0,1] \to [\underline{\alpha}, \overline{\alpha}]$ with $0 \leq \underline{\alpha} < \overline{\alpha} < \infty$;
\label{alpha1}
\item $\alpha$ is monotonically decreasing; \label{alpha2}
\item $\alpha(0) = \overline{\alpha}$ and $\alpha(1) = \underline{\alpha}$;
\label{alpha3}
\item $\alpha$ is twice continuously differentiable, \label{alpha4}
\item $\alpha$ is strongly convex, \label{alpha5}
\end{enumerate}
generating an operator also denoted $\alpha: C_\gamma \to L^\infty(\Omega; [\underline{\alpha},\overline{\alpha}])$. Typically, in the literature, $\alpha$ takes the form \cite{Borrvall2003, Evgrafov2014}
\begin{align}
\alpha(\rho) = \bar{\alpha}\left( 1 - \frac{\rho(q+1)}{\rho+q}\right), \label{eq:alphachoice}
\end{align}
where $q>0$ is a penalty parameter, so that $\lim_{q \to \infty} \alpha(\rho) = \bar{\alpha}(1-\rho)$. Borrvall and Petersson \cite[Sec.~3.2]{Borrvall2003} remark that as $q \to \infty$ the material distribution tends to a 0-1 solution. 

The following existence theorem is due to Borrvall and Petersson \cite[Th.~3.1]{Borrvall2003}.
\begin{theorem}
\label{th:BPexistence}
Suppose that $\Omega \subset \mathbb{R}^d$ is a Lipschitz domain, with $d=2$ or $d=3$ and $\alpha$ {satisfies properties \labelcref{alpha1}--\labelcref{alpha5}. Then, there exists a pair} $(\vect{u}, \rho) \in H^1_{\vect{g},\mathrm{div}}(\Omega) \times C_\gamma$ that minimizes $J$ (as defined in \cref{borrvallmin}). 
\end{theorem}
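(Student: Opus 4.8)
The plan is to apply the direct method of the calculus of variations. First I would record that the admissible set $H^1_{\vect{g},\mathrm{div}}(\Omega)^d \times C_\gamma$ is nonempty (take, e.g., $\rho \equiv \gamma$ together with any divergence-free lift of $\vect{g}$) and that $J$ is bounded below. Since $\alpha(\rho) \ge \underline{\alpha} \ge 0$ by \labelcref{alpha1}, the inverse-permeability term is nonnegative, so it suffices to bound $\frac{\nu}{2}\int_\Omega |\nabla \vect{u}|^2\,\dx - \int_\Omega \vect{f}\cdot\vect{u}\,\dx$ from below. Writing $\vect{u} = \vect{u}_{\vect{g}} + \vect{w}$ with $\vect{w} \in H^1_0(\Omega)^d$ and a fixed lift $\vect{u}_{\vect{g}}$ of $\vect{g}$, a Poincar\'e--Friedrichs inequality (legitimate because $\mathcal{H}^{d-1}(\Gamma) > 0$) controls $\|\vect{u}\|_{H^1(\Omega)}$ by $\|\nabla \vect{u}\|_{L^2(\Omega)}$ up to the data, while Young's inequality absorbs the linear term. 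This simultaneously yields coercivity: $J(\vect{u},\rho) \to +\infty$ as $\|\vect{u}\|_{H^1(\Omega)} \to \infty$, uniformly in $\rho$.

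Next I would take a minimizing sequence $(\vect{u}_n, \rho_n)$. Coercivity bounds $\{\vect{u}_n\}$ in $H^1(\Omega)^d$, and $0 \le \rho_n \le 1$ bounds $\{\rho_n\}$ in $L^\infty(\Omega)$. Passing to a subsequence, I would extract $\vect{u}_n \rightharpoonup \vect{u}$ weakly in $H^1(\Omega)^d$ and, crucially, $\vect{u}_n \to \vect{u}$ strongly in $L^2(\Omega)^d$ by the Rellich--Kondrachov theorem, together with $\rho_n \stackrel{*}{\rightharpoonup} \rho$ weak-$*$ in $L^\infty(\Omega)$. The limit is admissible: $H^1_{\vect{g},\mathrm{div}}(\Omega)^d$ is a closed affine subspace, hence weakly closed (the trace and divergence operators are weakly continuous), so $\vect{u} \in H^1_{\vect{g},\mathrm{div}}(\Omega)^d$; and $C_\gamma$ is convex and weak-$*$ closed, so testing the weak-$*$ limit against $1$ and against nonnegative $L^1(\Omega)$ functions preserves $\int_\Omega \rho\,\dx \le \gamma|\Omega|$ and $0 \le \rho \le 1$ a.e.

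It then remains to show that $J$ is weakly lower semicontinuous along this sequence. The Dirichlet term $\frac{\nu}{2}\int_\Omega |\nabla \vect{u}_n|^2\,\dx$ is convex and strongly continuous, hence weakly lower semicontinuous, and the linear term $-\int_\Omega \vect{f}\cdot\vect{u}_n\,\dx$ converges by the strong $L^2$ convergence of $\vect{u}_n$. The genuine obstacle is the inverse-permeability term $\frac{1}{2}\int_\Omega \alpha(\rho_n)|\vect{u}_n|^2\,\dx$, which couples the nonlinear quantity $\alpha(\rho_n)$ of the merely weak-$*$ convergent sequence $\rho_n$ with $|\vect{u}_n|^2$. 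I would split it as $\int_\Omega \alpha(\rho_n)\bigl(|\vect{u}_n|^2 - |\vect{u}|^2\bigr)\,\dx + \int_\Omega \alpha(\rho_n)|\vect{u}|^2\,\dx$. The first integral vanishes in the limit since $|\alpha(\rho_n)| \le \overline{\alpha}$ by \labelcref{alpha1} and $|\vect{u}_n|^2 \to |\vect{u}|^2$ in $L^1(\Omega)$. For the second, I would invoke convexity of $\alpha$ (implied by the strong convexity \labelcref{alpha5}) through the tangent-line inequality $\alpha(\rho_n) \ge \alpha(\rho) + \alpha'(\rho)(\rho_n - \rho)$, valid because $\alpha \in C^2([0,1])$ by \labelcref{alpha4}.

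Multiplying this inequality by $|\vect{u}|^2 \ge 0$ and integrating, the correction term $\int_\Omega \alpha'(\rho)(\rho_n-\rho)|\vect{u}|^2\,\dx \to 0$, since $\alpha'(\rho)|\vect{u}|^2 \in L^1(\Omega)$ (as $\alpha'$ is bounded and $|\vect{u}|^2 \in L^1(\Omega)$) while $\rho_n - \rho \stackrel{*}{\rightharpoonup} 0$; this gives $\liminf_n \int_\Omega \alpha(\rho_n)|\vect{u}|^2\,\dx \ge \int_\Omega \alpha(\rho)|\vect{u}|^2\,\dx$. Combining the three contributions yields $J(\vect{u},\rho) \le \liminf_n J(\vect{u}_n, \rho_n) = \inf J$, so $(\vect{u},\rho)$ is the sought minimizer. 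The key difficulty throughout is precisely this interplay of weak-$*$ convergence with the nonlinearity $\alpha$, which is resolved only because the velocity converges strongly in $L^2$ and $\alpha$ is convex.
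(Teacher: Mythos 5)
Your proof is correct: the direct method, coercivity via Poincar\'e and Young, extraction of weak $H^1$ / weak-$*$ $L^\infty$ limits, admissibility of the limit pair, and the splitting of the permeability term using strong $L^2$ convergence of $\vect{u}_n$ together with the tangent-line (convexity) inequality for $\alpha$ are all sound. The paper itself gives no proof---it cites Borrvall and Petersson [Th.~3.1]---and your argument is essentially the standard one in that reference, resting on the same two pillars: compactness of the velocity in $L^2(\Omega)^d$ and convexity of $\alpha$ to pass to the limit against the merely weak-$*$ convergent densities.
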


\section{The deflated barrier method}
\label{sec:dab}
In this section, we summarize how the deflated barrier method finds multiple solutions of fluid topology optimization problems.

\subsection{Forming the barrier functional}
The first step in the algorithm is to form a barrier-like objective from the original objective functional. This is both to aid convergence and to provide an opportunity for deflation to discover multiple local minima (described in Section \ref{sec:deflation}).
Consider the minimization problem: find the minimizers, $(\vect{u}, \rho) \in H^1_{\vect{g},\mathrm{div}}(\Omega)^d \times C_\gamma$, of
\begin{align}
J_\mu^{\epsilon_{\text{log}}}(\vect{u},\rho)\coloneqq J(\vect{u},\rho) - \mu \int_\Omega \log(\rho + \epsilon_{\text{log}}) + \log(1+ \epsilon_{\text{log}}- \rho)\dx,
\label{barriermin} \tag{BF}
\end{align}
where $\mu \ge 0$ is the barrier parameter and $0 < \epsilon_{\text{log}} \ll 1$. We note that the box constraints imposed by the barrier-like terms are never active as $0 \leq \rho \leq 1$ a.e.~in $\Omega$. A similar result to one shown by Evgrafov \cite[Sec.~4]{Evgrafov2014} guarantees the existence of a minimizer to \cref{barriermin} \cite[Prop.~4.2]{Papadopoulos2021e}. As with the original optimization problem, \cref{borrvallmin}, the minimizer is not necessarily unique.

The deflated barrier method targets the first-order optimality conditions of this problem, which we derive next.
Define the following forms to be used in stating the first-order optimality conditions:
\begin{align}
c(\rho, \eta; \vect{u}, \lambda)&\coloneqq\int_\Omega \left[\frac{1}{2} \alpha'(\rho)|\vect{u}|^2 - \frac{\mu}{\rho + \epsilon_{\text{log}}} + \frac{\mu}{1+ \epsilon_{\text{log}} - \rho} + \lambda \right] \eta \; \dx,\\
a(\vect{u},\vect{v}; \rho)&\coloneqq\int_\Omega \alpha(\rho) \vect{u} \cdot \vect{v} + \nu \nabla \vect{u} : \nabla \vect{v} \; \dx, \quad 
l(\vect{v}) \coloneqq \int_\Omega \vect{f} \cdot \vect{v} \; \dx, \\
b(\vect{u},p)&\coloneqq-\int_\Omega  p \divv{\vect{u}} \; \dx,  \quad
d(\lambda, \zeta; \rho)\coloneqq- \zeta \int_\Omega (\gamma - \rho)  \dx.  
\end{align}
The proof of the following proposition, concerning the first-order optimality conditions of \cref{barriermin}, follows from the result in \cite[Prop.~2.5]{Papadopoulos2021e}.
\begin{proposition}
Fix $\mu \geq 0$, $0 < \epsilon_{\mathrm{log}} \ll 1$ and suppose that $(\vect{u},\rho) \in H^1_{\vect{g},\mathrm{div}}(\Omega)^d \times C_\gamma$ is a strict local or global minimizer of \cref{barriermin}. Then, there exist unique Lagrange multipliers $p \in L^2_0(\Omega)$ and $\lambda \in \mathbb{R}$, such that, for all $(\eta, \vect{v}, q, \zeta) \in C_{[0,1]} \times H^1_0(\Omega)^d \times L^2_0(\Omega) \times \mathbb{R}$, the following necessary first-order optimality conditions are satisfied:
\begin{align}
c(\rho, \eta-\rho; \vect{u}, \lambda) &\geq 0, \label{foc:lag1}\\
a(\vect{u},\vect{v}; \rho) + b(\vect{v}, p)&=l(\vect{v}),\label{foc:lag2} \\
b(\vect{u},q)& = 0, \label{foc:lag3} \\
d(\lambda, \zeta;\rho)&=0.  \label{foc:lag4} 
\end{align}
Here $C_{[0,1]} \coloneqq \cup_{\gamma \in [0,1]} C_\gamma$, i.e.~we relax the volume constraint on the variation in the material distribution.
\end{proposition}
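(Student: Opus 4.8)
The plan is to exploit the product structure of the admissible set and reduce the statement to two classical first-order principles: an Euler--Lagrange identity for $\vect{u}$, from which the pressure is recovered by the de Rham theorem, and a variational inequality for $\rho$ made precise through Karush--Kuhn--Tucker (KKT) theory for the single affine volume constraint. The starting observation is that a joint local minimizer $(\vect{u},\rho)$ of \cref{barriermin} is also a partial minimizer in each argument separately: $\vect{u}$ minimizes $\vect{v}\mapsto J_\mu^{\epsilon_{\mathrm{log}}}(\vect{v},\rho)$ over the closed affine space $H^1_{\vect{g},\mathrm{div}}(\Omega)^d$, and $\rho$ minimizes $\eta\mapsto J_\mu^{\epsilon_{\mathrm{log}}}(\vect{u},\eta)$ over the convex set $C_\gamma$. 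I treat the two minimizations in turn.

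For the velocity, $\vect{v}\mapsto J_\mu^{\epsilon_{\mathrm{log}}}(\vect{v},\rho)$ is a strictly convex quadratic, so its vanishing directional derivative at the minimizer over the affine divergence-free space gives $a(\vect{u},\vect{v};\rho)=l(\vect{v})$ for every $\vect{v}\in H^1_0(\Omega)^d$ with $\mathrm{div}\,\vect{v}=0$. The bounded linear functional $\vect{v}\mapsto l(\vect{v})-a(\vect{u},\vect{v};\rho)$ on $H^1_0(\Omega)^d$ therefore annihilates the kernel of the divergence. Since $\Omega$ is a bounded Lipschitz domain, $\mathrm{div}\colon H^1_0(\Omega)^d\to L^2_0(\Omega)$ is surjective with closed range (it admits an explicit bounded right inverse), so the de Rham theorem supplies a \emph{unique} $p\in L^2_0(\Omega)$ with $l(\vect{v})-a(\vect{u},\vect{v};\rho)=b(\vect{v},p)$; this is exactly \cref{foc:lag2}, and its uniqueness is the inf-sup/Ne\v{c}as estimate for $b$. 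Condition \cref{foc:lag3} merely records the feasibility $\mathrm{div}\,\vect{u}=0$ built into $H^1_{\vect{g},\mathrm{div}}(\Omega)^d$.

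For the density, the shift $0<\epsilon_{\mathrm{log}}\ll1$ keeps $\rho+\epsilon_{\mathrm{log}}$ and $1+\epsilon_{\mathrm{log}}-\rho$ bounded below by $\epsilon_{\mathrm{log}}$ on $[0,1]$, so with \labelcref{alpha4} the map $\eta\mapsto J_\mu^{\epsilon_{\mathrm{log}}}(\vect{u},\eta)$ is G\^ateaux differentiable and its derivative is the bracketed integrand of $c$. Convexity of $C_\gamma$ then yields the variational inequality $c(\rho,\eta-\rho;\vect{u},0)\ge0$ for all $\eta\in C_\gamma$. To separate the affine volume constraint $\int_\Omega\rho\,\dx\le\gamma|\Omega|$ from the box membership and to introduce $\lambda$, I apply the KKT multiplier rule over the convex box set $C_{[0,1]}$; Slater's condition holds because $\rho\equiv\gamma/2$ is strictly feasible, producing a multiplier $\lambda\ge0$ for which $c(\rho,\eta-\rho;\vect{u},\lambda)\ge0$ for all $\eta\in C_{[0,1]}$. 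This is \cref{foc:lag1}. The multiplier is unique: on the positive-measure set where $0<\rho<1$ the inequality holds for admissible variations of either sign and hence as a pointwise equality, which pins down the single scalar $\lambda$.

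The main obstacle is \cref{foc:lag4}, the activity $\int_\Omega(\gamma-\rho)\,\dx=0$ of the volume constraint, which the multiplier rule alone does not furnish: complementary slackness only gives $\lambda\int_\Omega(\gamma-\rho)\,\dx=0$, so activity is immediate when $\lambda>0$ but must be established separately in the degenerate case $\lambda=0$. The natural route is structural, exploiting the monotonicity \labelcref{alpha2} (so $\alpha'\le0$) and the coupling to $\vect{u}$: admitting more fluid lowers $\alpha$ and thereby the dissipation $\tfrac12\int_\Omega\alpha(\rho)|\vect{u}|^2\,\dx$, so one expects the constraint to bind at an optimum. Because the barrier integrand is itself non-monotone in $\rho$, turning this heuristic into a rigorous exclusion of $\lambda=0$ is the genuinely delicate step, and it is precisely this point that is inherited from \cite[Prop.~2.5]{Papadopoulos2021e}. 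Recording complementarity as the equality $d(\lambda,\zeta;\rho)=0$ for all $\zeta\in\mathbb{R}$ then closes the derivation of \crefrange{foc:lag1}{foc:lag4}.
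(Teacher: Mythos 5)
Your outline is the natural one, and the parts you carry out in full are sound: partial minimization in $\vect{u}$ plus the de Rham/Bogovskii argument gives \cref{foc:lag2}, \cref{foc:lag3} and the uniqueness of $p$, and the KKT rule with the Slater point $\eta \equiv \gamma/2$ gives \cref{foc:lag1} with some multiplier $\lambda \ge 0$. Bear in mind, however, that the paper offers no in-text proof to compare against: it defers the entire proposition to \cite[Prop.~2.5]{Papadopoulos2021e}, which is also where you send the one step you flag as delicate (the activity statement \cref{foc:lag4} when $\lambda = 0$). Given that, the substantive question is whether the steps you \emph{do} claim to prove are complete, and there is a genuine gap in your uniqueness argument for $\lambda$.

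You assert that the set $\{0 < \rho < 1\}$ has positive measure and deduce that the pointwise stationarity identity there pins down $\lambda$. Nothing in the hypotheses guarantees this. The barrier terms are finite at $\rho = 0$ and $\rho = 1$ (they blow up only at the enlarged bounds $-\epsilon_{\mathrm{log}}$ and $1 + \epsilon_{\mathrm{log}}$; the paper stresses that the true box constraints remain active for any $\mu \ge 0$), and the proposition explicitly allows $\mu = 0$, where minimizers of the Borrvall--Petersson functional are typically essentially binary. If $|\{0 < \rho < 1\}| = 0$, the variational inequality \cref{foc:lag1} only yields the one-sided conditions
\begin{align*}
\tfrac{1}{2}\alpha'(\rho)|\vect{u}|^2 - \tfrac{\mu}{\rho + \epsilon_{\mathrm{log}}} + \tfrac{\mu}{1 + \epsilon_{\mathrm{log}} - \rho} + \lambda
\;\;
\begin{cases}
\geq 0 & \text{a.e.\ on } \{\rho = 0\},\\
\leq 0 & \text{a.e.\ on } \{\rho = 1\},
\end{cases}
\end{align*}
which confine $\lambda$ to an interval determined by essential bounds of $-\tfrac{1}{2}\alpha'(\rho)|\vect{u}|^2$ on the two sets, not to a point. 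For $\mu > 0$ you can likely rescue the claim: playing the two one-sided conditions against each other (the $\mu/\epsilon_{\mathrm{log}}$ terms have opposite signs and dominate for $\epsilon_{\mathrm{log}} \ll 1$) shows $\{\rho = 0\}$ and $\{\rho = 1\}$ cannot both carry positive measure while $\{0<\rho<1\}$ is null, unless $\rho \equiv 0$, a case excluded by showing it is not a minimizer. But for $\mu = 0$ this mechanism disappears, and the uniqueness of $\lambda$ then needs the same structural argument as \cref{foc:lag4}; both should be attributed to \cite[Prop.~2.5]{Papadopoulos2021e} rather than claimed as proved.
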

Under suitable conditions, \cref{foc:lag1}--\cref{foc:lag4} can be reformulated as a complementarity problem. After a suitable discretization, throughout the deflated barrier method, the nonlinear complementarity problems are solved using the Benson--Munson (BM) algorithm \cite{Benson2003}, a Newton-like algorithm that can enforce the true box constraints on $\rho$. A thorough description of this strategy is given in \cite[Sec.~3.1]{Papadopoulos2021a} and \cref{sec:BMsystem}. We emphasize that the barrier-like terms are  \emph{not} used to enforce box constraints, which are handled by BM, and the enlarged box constraints $[-\epsilon_{\text{log}}, 1 + \epsilon_{\text{log}}]$ are never active for any $\mu \geq 0$. The barrier-like terms are included solely to aid nonlinear convergence via continuation, and to find multiple solutions via deflation.

\subsection{Deflation}
\label{sec:deflation}
Deflation is a mechanism to systemically discover multiple solutions of a nonlinear system with a Newton-like algorithm \cite{Farrell2015, Farrell2019}. Let $Z$ and $Y$ be Banach spaces. Consider the nonlinear system $F(z) = 0,$ $F: Z \to Y$ that has the solutions $z_1, \dots, z_l$. Given a suitable initial guess, a Newton-like algorithm might converge to the solution $z_1$. Now, deflation modifies the nonlinear system in order to remove the known solution whilst retaining undiscovered solutions. This is done via a deflation operator $\mathcal{M}(z;z_1) :Y \to Y$ that ensures that $\mathcal{M}(z;z_1)F(z)$ has a root if the original problem $F(z) = 0$ has an unknown solution, and ensures that a Newton-like method applied to the newly deflated system does not converge to  $z = z_1$.
In this work, the following shifted deflation operator \cite{Farrell2015} is used
\begin{align}
\mathcal{M}(\vect{z}; \vect{z}_1) = \left(\frac{1}{\| \rho - \rho_1\|^2_{L^2(\Omega)}} + 1\right) \mathcal{I},
\end{align}
where $\vect{z} = (\rho, \vect{u}, p, \lambda)$ and $\mathcal{I} :Y \to Y$ is the identity operator. Deflation can be very efficiently implemented; the BM update of the deflated system can be expressed as a (nonlinear) scaling of the BM update of the \emph{undeflated} system evaluated at the same iterate \cite[Sec.~3]{Farrell2015}. This implementation detail is crucial for this work as it implies that preconditioning strategies for the undeflated BM systems can be immediately applied to the deflated systems. A discussion of how the nonlinear scaling is cheaply computed is given in \cite[Sec.~3.2]{Papadopoulos2021a}. 

\subsection{Prediction, continuation, and deflation} The deflated barrier method \cite[Alg.~3.1]{Papadopoulos2021a} is an iterative procedure consisting of three steps. The algorithm is initialized with barrier parameter $\mu = \mu_0$ and the idea is to follow branches (a generalization of the central path) of solutions as $\mu \to 0$. At iteration $k$ of the deflated barrier method, the barrier functional subproblem at $\mu = \mu_k$  is constructed. In the \emph{prediction step}, a cheap initial guess is computed for the barrier functional subproblem from the solution on the same branch at $\mu = \mu_{k-1}$. In the \emph{continuation step}, the BM solver is used to find a solution to the first-order optimality conditions \cref{foc:lag1}--\cref{foc:lag4} at $\mu = \mu_k$, using the initial guess computed in the prediction step, whilst deflating away all other known solutions at $\mu = \mu_k$. In the \emph{deflation step}, we search for new branches of solutions at $\mu = \mu_k$, using solutions discovered at $\mu = \mu_{k-1}$ as initial guesses. The deflation step is terminated if a pre-determined number of branches have already been found or the BM solver fails to converge (by reaching a specified number of iterations without converging). The algorithm terminates when the first-order optimality conditions have been solved (for multiple solutions) at a barrier parameter value of zero.

\section{Discretization and linearization}
\label{sec:discretization}
\subsection{Finite element discretization}
The linear systems that arise in the deflated barrier method will be tied to our choice of finite element discretization. In this work, we choose a discontinuous Galerkin (DG) $k$th-order $\mathrm{BDM}_k \times \mathrm{DG}_{k-1}$ Brezzi--Douglas--Marini  finite element discretization for the velocity and the pressure \cite{Brezzi1987,Brezzi1985}. Let $\mathcal{T}_h$ denote a shape-regular triangulation of the domain $\Omega$. We define the spaces $\vect{X}_{\mathrm{BDM}_k}$ and $\vect{X}_{\mathrm{BDM}_k^{\vect{g}}}$ as:
\begin{align}
\vect{X}_{\mathrm{BDM}_k} &\coloneqq \{ \vect{v} \in L^2(\Omega)^d: \, \divv{\vect{v}} \in L^2(\Omega), \vect{v}|_T \in \mathcal{P}_k(T)^d \,\forall\, T \in \mathcal{T}_h  \},\\
\vect{X}_{\mathrm{BDM}_k^{\vect{g}}} &\coloneqq \{ \vect{v} \in \vect{X}_{\mathrm{BDM}_k} :(\vect{v} - \vect{g}) \cdot \vect{n} = 0 \; \text{on} \; \partial \Omega \},
\end{align}
where $\mathcal{P}_k$ denotes the set of polynomials of order $k$. The degrees of freedom for two dimensions are given in \cite[Sec.~2]{Brezzi1985} and for three dimensions in \cite[Sec.~2]{Brezzi1987}. Similarly, $X_{\mathrm{DG}_{k-1}}$ denotes the set of discontinuous piecewise $(k-1)$-th order polynomials: 
\begin{equation}
X_{\mathrm{DG}_{k-1}} \coloneqq \{ v \in L^2(\Omega): v|_T \in \mathcal{P}_{k-1}(T) \,\forall\, T \in \mathcal{T}_h  \}.
\end{equation}

Some finite element methods for fluid flow, such as the Taylor--Hood finite element pair, do not satisfy the incompressibility constraint $\divv{\vect{u}} = 0$ pointwise. Failure to satisfy the incompressibility constraint pointwise has been observed to support instabilities that result in nonphysical solutions \cite{John2017, Linke2019}. In the BDM finite element pair, $\divv{\vect{X}_{\mathrm{BDM}_k}} \subset X_{\mathrm{DG}_{k-1}}$. Therefore, for any solution $\vect{u}_h$ satisfying the incompressibility constraint \cref{foc:lag3}, we have that $\| \divv{\vect{u}_h} \|_{L^2(\Omega)} = 0$. Hence, the solution is pointwise divergence-free.

The finite element space for the material distribution is denoted by $C_{[0,1],h}$ and is defined by
\begin{align}
C_{[0,1],h} \coloneqq \{ \eta_h \in X_{\mathrm{DG}_0} : 0 \leq \eta_h \leq 1\},
\end{align}
where $X_{\mathrm{DG}_0}$ is the set of piecewise constant finite element functions. As the test functions of $\mathrm{DG}_0$ take the value of one in their respective element and are zero elsewhere, in practice we discretize the material distribution with $\mathrm{DG}_0$ functions and allow the Benson--Munson strategy described below to handle the box constraints.

Since the discontinuous velocity space is not $H^1$-conforming, we use an interior penalty to penalize jumps across edges and faces. We now describe the DG discretization of \cref{foc:lag1}--\cref{foc:lag4} as found in \cite[Sec.~7.1]{Gauger2019} and \cite[Sec.~3]{Papadopoulos2021d}. The interior penalty only arises in the discretization of \cref{foc:lag2}.

Let the set $\mathcal{F}_h$ denote the set of all facets of the triangulation $\mathcal{T}_h$ and $h_F$ represent the diameter of each facet $F \in \mathcal{F}_h$. We split the set of facets into the union $\mathcal{F}_h = \mathcal{F}^i_h \cup \mathcal{F}^\partial_h$ where $\mathcal{F}^i_h$ is the subset of interior facets and $\mathcal{F}^\partial_h$ collects all Dirichlet boundary facets $F \subset \partial \Omega$. For every facet $F \in \mathcal{F}_h$, we assign a unit norm vector $\vect{n}_F$, where if $F \in \mathcal{F}^\partial_h$, then $\vect{n}_F$ is the outer unit normal vector $\vect{n}$. If $F \in \mathcal{F}^i_h$, then $F= \overline{\partial K^+} \cap \overline{\partial K^-}$ for two elements $K^-, K^+ \in \mathcal{T}_h$ and $\vect{n}_F$ points in an arbitrary but fixed direction. Let $\vect{\phi} \in (X_{\DG_k})^d$ and $\tens{\Phi} \in (X_{\DG_k})^{d \times d}$ be any piecewise vector- or matrix-valued function, with traces from within the interior of $K^\pm$ denoted by $\vect{\phi}^\pm$ and $\tens{\Phi}^\pm$, respectively. We define the jump $\lsb \cdot \rsb_F$ and the average $\lcb \cdot \rcb_F$ operators across interior facets $F\in \mathcal{F}^i_h$ by
\begin{align}
\lsb \vect{\phi} \rsb_F = \vect{\phi}^+ \otimes \vect{n}_F^+ + \vect{\phi}^- \otimes \vect{n}_F^- \quad \text{and} \quad \lcb \tens{\Phi} \rcb_F = \frac{1}{2}\left(\tens{\Phi}^+ + \tens{\Phi}^-\right).
\end{align}
If $F \in \mathcal{F}^\partial_h$, we set $\lsb \vect{\phi} \rsb_F = \vect{\phi} \otimes \vect{n}_F$ and $\lcb \tens{\Phi} \rcb_F = \tens{\Phi}$. We note that, for any $F \in \mathcal{F}^\partial_h$, $\int_F |\lsb \vect{\phi} \rsb_F|^2 \, \mathrm{d}s = \int_F | \vect{\phi} |^2 \, \mathrm{d}s$. Finally, for a sufficiently large penalization parameter $\sigma > 0$, we define the broken form $a_h(\vect{u}, \vect{v}; \rho)$ by
\begin{align}
\begin{split}
a_h(\vect{u}, \vect{v}; \rho)&\coloneqq \sum_{K\in \mathcal{T}_h} \int_K  \alpha(\rho) \vect{u} \cdot \vect{v}+ \nu \nabla \vect{u} : \nabla \vect{v} \; \dx + \nu \sum_{F \in \mathcal{F}_h} \sigma h_F^{-1} \int_F  \lsb \vect{u}\rsb_F : \lsb \vect{v}\rsb_F \mathrm{d}s\\
& - \nu \sum_{F \in \mathcal{F}_h}\int_F \lcb \nabla \vect{u} \rcb_F : \lsb \vect{v} \rsb_F \mathrm{d}s -\nu \sum_{F \in \mathcal{F}_h} \int_F  \lsb \vect{u} \rsb_F : \lcb \nabla \vect{v} \rcb_F \mathrm{d}s,
\end{split}
\end{align}
and the linear functional $l_h$ as
\begin{align}
l_h(\vect{v})&\coloneqq  \int_\Omega \vect{f} \cdot \vect{v} \; \dx + \nu\sum_{F \in \mathcal{F}^\partial_h} \sigma h_F^{-1} \int_F \lsb \vect{g} \rsb_F : \lsb \vect{v} \rsb_F \; \mathrm{d}s - \nu \sum_{F \in \mathcal{F}^\partial_h} \int_F \lsb \vect{g} \rsb_F : \lcb \nabla \vect{v} \rcb_F \mathrm{d}s.
\end{align}
Then, given a barrier parameter $\mu$ and a penalization parameter $\sigma > 0$, the discretized deflated barrier subproblem is to find $(\rho_h, \vect{u}_h, p_h, \lambda_h) \in C_{\gamma,h} \times \vect{X}_{\mathrm{BDM}^{\vect{g}}_k} \times X_{\mathrm{DG}_{k-1}} \times \mathbb{R}$ such that, for all $(\eta_h, \vect{v}_h, q_h, \zeta_h) \in C_{[0,1],h} \times \vect{X}_{\mathrm{BDM}^{\vect{0}}_k} \times (X_{\mathrm{DG}_{k-1}} \backslash \mathbb{R}) \times \mathbb{R}$, we have:
\begin{align}
c(\rho_h, \eta_h - \rho_h; \vect{u}_h, \lambda_h) &\geq 0, \label{dfoc1}\\
a_h(\vect{u}_h, \vect{v}_h;\rho_h) +  b(\vect{v}_h,p_h)&=l_h(\vect{v}_h),\label{dfoc2}\\
b(\vect{u}_h,q_h) &=0, \label{dfoc3}\\
d(\lambda_h, \zeta_h; \rho_h) &=0.\label{dfoc4}
\end{align}
 Let the broken $H^1$-norm $\| \cdot \|^2_{H^1_{\vect{g}}(\mathcal{T}_h)}$ be defined as
\begin{align}
\begin{split}
\| \vect{v} \|^2_{H^1_{\vect{g}}(\mathcal{T}_h)} &\coloneqq \|\vect{v}\|^2_{L^2(\Omega)} + \sum_{K \in \mathcal{T}_h} \| \nabla \vect{v} \|^2_{L^2(K)} \\
&\indent + \sum_{F \in \mathcal{F}^i_h} \int_F h_F^{-1} | \lsb \vect{v} \rsb_F|^2 \mathrm{d}s+ \sum_{F \in \mathcal{F}^\partial_h} \int_F h_F^{-1} | \lsb \vect{v} - \vect{g} \rsb_F|^2 \mathrm{d}s. 
\end{split}
\end{align}
Building on previous work \cite{Papadopoulos2022a}, it was shown by Papadopoulos \cite{Papadopoulos2021d} that, for every isolated minimizer $(\vect{u}, \rho, p)$ of \cref{borrvallmin}, there exists a sequence of discretized solutions $(\vect{u}_h, \rho_h, p_h, \lambda_h)$ to \cref{dfoc1}--\cref{dfoc4}, such that, as $h \to 0$, $\|\vect{u} - \vect{u}_h\|_{H^1_{\vect{g}}(\mathcal{T}_h)} \to 0$, $\|\rho - \rho_h\|_{L^s(\Omega)} \to 0$, $s \in [1,\infty)$, and $\|p - p_h\|_{L^2(\Omega)} \to 0$. By extrapolating known results of a BDM discretization for the Stokes and Stokes--Brinkman equations \cite{Brezzi1987, Brezzi1985, Konno2012} we expect a first-order $\mathrm{BDM}_1$ discretization to converge at a rate of $\mathcal{O}(h)$ in the broken $H^1$-norm for the velocity, the $L^2$-norm for the pressure, and the $L^2$-norm for the material distribution. Numerical evidence for these rates of convergence is given in \cite[Fig.~2]{Papadopoulos2021d}.

\begin{remark}
The value of $\alpha(\rho)$ in the momentum equation \cref{foc:lag2} can range between 0 and $\bar \alpha$ where in practice $\bar \alpha \sim \mathcal{O}(10^4)$. Hence, an element that is robust to the transition between Stokes and Darcy flow may have better accuracy, e.g.~the Mardal--Tai--Winther (MTW) finite element \cite{Mardal2002}. The MTW finite element also produces pointwise divergence-free solutions and the characterization of the kernel of the divergence operator is known \cite[Sec.~4.2]{Mardal2002}. In particular, it has been shown that vertex-star patch relaxation is also effective for multigrid cycles involving MTW finite elements \cite[Sec.~7.2]{Aznaran2021}. Hence, we believe the preconditioner described below would still be effective. However, unlike BDM finite elements with an interior penalty, there are currently no convergence results for an MTW finite element discretization to minimizers of the Borrvall--Petersson problem.
\end{remark}

\subsection{The Benson--Munson linear system}
\label{sec:BMsystem}
The BM solver \cite{Benson2003} attempts to find a solution of a complementarity problem via linearizations of the residual constrained to the inactive set. First, the discrete Newton system is formed and the active set is defined. The active set contains the degrees of freedom that satisfy a strict complementarity condition in the primal and residual vectors. Next, the rows and columns of the Jacobian in the Newton system associated with the active set degrees of freedom are set to those of the identity. Finally, the rows on the right-hand side vector associated with the active set degrees of freedom are fixed to zero. Once the update, $\delta \vectt{z}$, of this modified system is computed, the new iterate $\vectt{z}^{k+1} = \vectt{z}^{k} + \delta \vectt{z}$ is component-wise projected onto the box constraints.

Let $\vectt{f}: \mathbb{R}^n \to \mathbb{R}^n$ and consider the box constraints $\vectt{a}, \vectt{b} \in \mathbb{R}^n$ where $\vectt{a}_i < \vectt{b}_i$ for all $i= 1,\dots,n$. Consider the mixed complementarity problem given by 
\begin{alignat}{2}
\text{either} &\;\; \vectt{a}_i < \vectt{z}_i < \vectt{b}_i \;\; &&\text{and} \;\; \vectt{f}(\vectt{z})_i = 0, \label{eq:mcp1}\\
\text{or} &\;\; \vectt{a}_i = \vectt{z}_i \;\; &&\text{and} \;\; \vectt{f}(\vectt{z})_i \geq 0,\\
\text{or} &\;\; \vectt{z}_i = \vectt{b}_i \;\; &&\text{and} \;\; \vectt{f}(\vectt{z})_i \leq 0. \label{eq:mcp3}
\end{alignat}
We define $\hat{\vectt{f}}: \{ \vectt{z} \in \mathbb{R}^n : \vectt{a}_i \leq \vectt{z}_i \leq \vectt{b}_i, \; i=1,\dots,n\} \to \mathbb{R}^n$ as follows, for $i=1,\dots,n$,
\begin{align}
[\hat{\vectt{f}}(\vectt{z})]_i \coloneqq
\begin{cases}
\vectt{f}(\vectt{z})_i & \text{if} \;\; \vectt{a}_i < \vectt{z}_i < \vectt{b}_i,\\
\min\{\vectt{f}(\vectt{z})_i, 0\} & \text{if}\;\; \vectt{z}_i = \vectt{a}_i,\\
\max\{\vectt{f}(\vectt{z})_i, 0\}& \text{if}\;\; \vectt{z}_i = \vectt{b}_i.
\end{cases}
\end{align}
We denote the component-wise projection onto the true box constraints by $\pi$, i.e.
\begin{align}
\pi(\vectt{z})_i = 
\begin{cases}
\vectt{a}_i & \text{if} \; \vectt{z}_i < \vectt{a}_i,\\
\vectt{z}_i&\text{if} \; \vectt{a}_i \leq \vectt{z}_i \leq \vectt{b}_i,\\
\vectt{b}_i & \text{if} \; \vectt{z}_i> \vectt{b}_i.
\end{cases}
\end{align}

Consider a matrix $\tens{A} \in \mathbb{R}^{n \times n}$ and the subset of indices $S \subset \{1,\dots, n\}$. Then, the matrix $\tens{A}_{S, S} \in \mathbb{R}^{n \times n}$ is defined by zeroing the rows and columns of $\tens{A}$ in $\{1,\dots,n\}\backslash S$ and placing a 1 on the diagonal of the zeroed rows and columns. Similarly for any column vector $\vectt{x} \in \mathbb{R}^n$, the vector $\vectt{x}_{S} \in \mathbb{R}^{n}$ is constructed by zeroing the rows in the set  $\{1,\dots,n\}\backslash S$ of $\vectt{x}$. 

\begin{remark}
In the original BM solver found in \cite[Sec.~3.2]{Benson2003}, the rows and columns are eliminated rather than zeroed, resulting in a reduced matrix. However, the BM solver as described in \cref{alg:bm} and \cite[Sec.~3.2]{Benson2003} are equivalent.
\end{remark}

\begin{algorithm}[ht]
\caption{Benson--Munson solver \cite[Sec.~3.2]{Benson2003}}
\label{alg:bm}
\begin{algorithmic}[1]
\Initialize{
$k \gets 0$ \Comment{Initial iteration number}\\
$\vectt{a}, \vectt{b}$ \Comment{Box constraints}\\
$\vectt{z}^0$ \Comment{Feasible initial guess}\\ 
tol \Comment{Approximate solve tolerance}\\
\vspace{2mm}
}
\While{$\| \hat{\vectt{f}}(\vectt{z}^k)\|_2 > \mathrm{tol}$}
\State{Define the active set $\mathcal{A}^k$ and the inactive set $\mathcal{I}^k$ as:
\begin{align*}
\mathcal{A}^k &\coloneqq \{i: \vectt{z}^k_i=\vectt{a}_i \; \text{and} \; \vf(\vectt{z}^k)_i > 0 \} \cup \{i: \vectt{z}^k_i=\vectt{b}_i \; \text{and} \; \vf(\vectt{z}^k)_i <  0 \},\\
\mathcal{I}^k &\coloneqq  \{i\}_{i=1}^n \backslash \mathcal{A}^k.
\end{align*}}
\State{Solve the BM linear system:
$\vectt{f}'(\vectt{z}^k)_{\mathcal{I}^k,\mathcal{I}^k}\delta \vectt{z} = -\vectt{f}(\vectt{z}^k)_{\mathcal{I}^k}$.}
\State{$\vectt{z}^{(k+1)} \gets \pi(\vectt{z}^k + \beta^k \delta \vectt{z})$}, where $\beta^k$ is a (possibly adaptive) linesearch. 
\State{$k \gets k+1$}
\EndWhile
\end{algorithmic}
\end{algorithm}

When applied to a linear elliptic problem, and if the active and inactive sets of the two algorithms coincide, the updates computed by BM and Hinterm\"uller et al.'s primal-dual active set strategy \cite{HintermullerIto2003} are identical \cite[Th.~A.1]{Papadopoulos2021a}. Under suitable conditions, the primal-dual active set strategy can be shown to be a semismooth Newton method \cite{Qi1993, Qi1993b, Ulbrich2003}. Although the problems we consider here do not satisfy the assumptions made in Theorem A.1 of \cite{Papadopoulos2021a}, we do observe local superlinear convergence in practice.

We now derive the linear systems that arise when using the BM active set strategy to solve the nonlinear system \cref{dfoc1}--\cref{dfoc4}. Denote the basis functions of the finite element spaces of the material distribution, the velocity, the pressure, and $\mathbb{R}$ by $\eta_i, \vect{\phi}_i, \psi_i$, and $r$, respectively and the number of degrees of freedom by $n_\rho, n_{\vect{u}}$, $n_p$, and 1, respectively, so that the total number of degrees of freedom of the system is given by $n = n_\rho + n_{\vect{u}} + n_p + 1$. Consider the finite element BM iterate $\vect{z}^k_{h} = (\rho^k_{h}, \vect{u}^k_{h}, p^k_{h}, \lambda^k_{h})$. Let $\vf^k: \mathbb{R}^n \to \mathbb{R}^n$ denote the nonlinear residual induced by the complementarity reformulation of \cref{dfoc1}--\cref{dfoc4}. In the following, we drop the superscript iteration number $^{k}$ for clarity. Let $\vectt{z}$ denote the discrete coefficient vector of $\vect{z}_h$. In the context of \cref{alg:bm}, the box constraints take the values $\vectt{a}_i = 0$ and $\vectt{b}_i=1$ for all degrees of freedom associated with $\rho_h$ and $\vectt{a}_i = -\infty$, $\vectt{b}_i=+\infty$, otherwise. The BM linear system, as solved on line 4 of \cref{alg:bm}, is the following:
\begin{align}
\tH_{\rho,\vect{u},p,\lambda} \delta \vectt{z} = \begin{pmatrix}
\tC_\mu & \tD^\top                        & \tzero            & \tE^\top \\
\tD        & \tA & \tB^\top & \tzero             \\
\tzero      & \tB                                & \tzero            & \tzero            \\
\tE        & \tzero                                   & \tzero            & \tzero             
\end{pmatrix}
\begin{pmatrix}
\delta \vectt{\rho} \\
\delta \vectt{u} \\
\delta \vectt{p} \\
\delta \vectt{\lambda}
\end{pmatrix}
=
-\begin{pmatrix}
\vectt{f}_{\rho} \\
\vectt{f}_{\vect{u}} \\
\vectt{f}_{p} \\
\vectt{f}_{\lambda}
\end{pmatrix}
= -\vectt{f},
\label{eq:DiscretizedNewton}
\end{align}
where $\delta \vectt{\rho}, \delta \vectt{u}, \delta \vectt{p}$ and $\delta \vectt{\lambda}$ denote the discrete coefficient vector BM updates for $\rho, \vect{u},  p$ and $\lambda$, and $\vectt{f}_{\rho}$, $\vectt{f}_{\vect{u}}$, $\vectt{f}_{p}$, and $\vectt{f}_{\lambda}$ are the corresponding blocks of the nonlinear residual with the active set rows, $i \in \mathcal{A}$, in $\vectt{f}_{\rho}$ zeroed. The entries of $\tA$ and $\tB$ are given by
\begin{align}
[\tA]_{ij} =  a_{h}( \vect{\phi}_j ,  \vect{\phi}_i; \rho_h) \text{ and } [\tB]_{ij} = b(\vect{\phi}_j,\psi_i).
\end{align}
Furthermore, if $j \in \mathcal{I}$, then
\begin{align}
[\tD]_{ij} = \int_\Omega (\alpha'(\rho_h) \vect{u}_h \cdot \vect{\phi}_i) \eta_j \; \dx, \;\;\;
[\tE]_{ij} = - r \int_\Omega \eta_j \; \dx.
\end{align}
Otherwise if $j \in \mathcal{A}$, then $[\tD]_{ij} = 0$ and $[\tE]_{ij} = 0$ for all $i$. Finally if $i, j \in \mathcal{I}$ then
\begin{align}
[\tC_\mu]_{ij}= \int_\Omega \left[\frac{1}{2} \alpha''(\rho_h)|\vect{u}_h|^2 + \frac{\mu}{(\rho_h + \epsilon_{\text{log}})^2} + \frac{\mu}{(1+ \epsilon_{\text{log}} - \rho_h)^2} \right] \eta_i \eta_j \dx. \label{Cmu}
\end{align}
Otherwise, if $i\in \mathcal{A}$ or $j \in \mathcal{A}$, then $[\tC_\mu]_{ij} = \delta_{ij}$, where $\delta_{ij}$ is the Kronecker delta. 
\begin{remark}
$\tE$ is a row vector of size $1 \times n_\rho$. 
\end{remark}
 
In the remainder of this subsection we discuss the invertibility of the matrix in \cref{eq:DiscretizedNewton} and its subblocks.

\begin{proposition}
\label{prop:posdef}
The matrix $\tA\in \mathbb{R}^{n_{\vect{u}} \times n_{\vect{u}}}$ is symmetric, and provided the penalization parameter $\sigma>0$ is sufficiently large, then it also positive-definite.
\end{proposition}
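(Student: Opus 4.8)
The plan is to treat the two claims separately: symmetry follows by inspection, while positive-definiteness is the standard coercivity argument for a symmetric interior penalty DG form, which is precisely where the hypothesis that $\sigma > 0$ be sufficiently large enters.

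For symmetry, I would observe that each of the four contributions to $a_h(\vect{u},\vect{v};\rho)$ is invariant under the interchange $\vect{u} \leftrightarrow \vect{v}$: the volume term $\alpha(\rho)\vect{u}\cdot\vect{v} + \nu \nabla \vect{u} : \nabla \vect{v}$ and the penalty term $\lsb \vect{u}\rsb_F : \lsb \vect{v}\rsb_F$ are manifestly symmetric, while the two consistency terms $\lcb \nabla \vect{u}\rcb_F : \lsb \vect{v}\rsb_F$ and $\lsb \vect{u}\rsb_F : \lcb \nabla \vect{v}\rcb_F$ are mapped into one another. Hence $a_h(\vect{u},\vect{v};\rho) = a_h(\vect{v},\vect{u};\rho)$, and since $[\tA]_{ij} = a_h(\vect{\phi}_j, \vect{\phi}_i; \rho_h)$ this gives $[\tA]_{ij} = [\tA]_{ji}$, so $\tA$ is symmetric.

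For positive-definiteness I would establish coercivity of $a_h$ on the finite element test space $\vect{X}_{\mathrm{BDM}^{\vect{0}}_k}$. Setting $\vect{u} = \vect{v}$,
\begin{equation*}
a_h(\vect{v},\vect{v};\rho) = \sum_{K \in \mathcal{T}_h} \int_K \alpha(\rho)|\vect{v}|^2 + \nu |\nabla \vect{v}|^2 \, \dx + \nu \sum_{F \in \mathcal{F}_h} \sigma h_F^{-1} \int_F |\lsb \vect{v}\rsb_F|^2 \, \mathrm{d}s - 2\nu \sum_{F \in \mathcal{F}_h} \int_F \lcb \nabla \vect{v}\rcb_F : \lsb \vect{v}\rsb_F \, \mathrm{d}s.
\end{equation*}
The only indefinite contribution is the last (consistency) term, and the key step is to control it using the discrete trace inequality $\|\nabla \vect{v}\|_{L^2(F)} \leq C_{\mathrm{tr}} h_F^{-1/2}\|\nabla \vect{v}\|_{L^2(K)}$, valid for piecewise polynomials on a shape-regular mesh, together with Cauchy--Schwarz and Young's inequality with a free parameter $\delta > 0$. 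This produces a bound of the form $\nu \delta^{-1} \sum_F h_F^{-1}\|\lsb \vect{v}\rsb_F\|_{L^2(F)}^2 + \nu \delta C \sum_K \|\nabla \vect{v}\|_{L^2(K)}^2$, where $C$ absorbs $C_{\mathrm{tr}}^2$ and the (uniformly bounded) number of facets per element. Choosing $\delta$ small enough that $\delta C < 1$ and then $\sigma > \delta^{-1}$ leaves strictly positive coefficients on both the elementwise gradient term and the jump penalty term; since $\alpha(\rho) \geq \underline{\alpha} \geq 0$ by \labelcref{alpha1} the mass term only helps. We thus obtain $a_h(\vect{v},\vect{v};\rho) \geq c\big(\sum_K \|\nabla \vect{v}\|_{L^2(K)}^2 + \sum_F h_F^{-1}\|\lsb \vect{v}\rsb_F\|_{L^2(F)}^2\big)$ for some $c > 0$, provided $\sigma$ is sufficiently large.

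Finally, I would upgrade this to control of the full broken norm $\|\vect{v}\|_{H^1_{\vect{0}}(\mathcal{T}_h)}$, which requires recovering the $L^2(\Omega)$ part: if $\underline{\alpha} > 0$ this is immediate from the mass term, and in general I would invoke a discrete Poincar\'e--Friedrichs inequality for the broken space, whose validity rests on a boundary condition being imposed on a set of positive surface measure, guaranteed here by $\mathcal{H}^{d-1}(\Gamma) > 0$ together with the normal-component constraint built into $\vect{X}_{\mathrm{BDM}^{\vect{0}}_k}$. As $\|\cdot\|_{H^1_{\vect{0}}(\mathcal{T}_h)}$ is then a genuine norm on this finite-dimensional space, coercivity $a_h(\vect{v},\vect{v};\rho) \geq c\|\vect{v}\|^2_{H^1_{\vect{0}}(\mathcal{T}_h)} > 0$ for $\vect{v} \neq \vect{0}$ forces $\vectt{x}^\top \tA \vectt{x} > 0$ for all $\vectt{x} \neq \vectt{0}$, i.e.\ $\tA$ is positive-definite. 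The main obstacle is the consistency-term estimate and the resulting explicit dependence of the threshold for $\sigma$ on $C_{\mathrm{tr}}$ and the mesh, that is, pinning down what \emph{sufficiently large} means; the Poincar\'e step is routine but must invoke the boundary hypothesis correctly.
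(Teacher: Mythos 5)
Your proposal is correct and follows essentially the same route as the paper: symmetry by inspection of the bilinear form, and positive-definiteness from coercivity of the interior-penalty DG form for sufficiently large $\sigma$ together with $\alpha(\rho) \geq 0$. The only difference is that the paper obtains the coercivity by citing Hong et al.~\cite[Sec.~3.3]{Hong2016}, whereas you write out the underlying argument (discrete trace inequality, Young's inequality, absorption into the penalty term, and a discrete Poincar\'e--Friedrichs step) in full.
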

\begin{proof}
Symmetry is realized by swapping the indices $i$ and $j$ of the basis functions in their respective definitions and noting that the resulting integrals are equal. Positive-definiteness of $\tA$, for sufficiently large $\sigma >0$, follows from $\alpha(\rho) \geq 0$ and \cite[Sec.~3.3]{Hong2016}.
\end{proof}

\begin{proposition}
\label{prop:CmuActiveSet}
The matrix $\tC_\mu$ is symmetric positive semi-definite. Moreover, if either $\mu > 0$ or $|\vect{u}_h| > 0$ a.e., then $\tC_\mu$ is symmetric positive-definite.
\end{proposition}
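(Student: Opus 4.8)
The plan is to recognize the quadratic form associated with $\tC_\mu$ as a weighted $L^2$ mass form, and to read off (semi-)definiteness directly from the sign of the weight. First I would record symmetry, which is immediate: interchanging $i$ and $j$ in \cref{Cmu} leaves the integrand $\eta_i\eta_j$ and the bracketed factor unchanged, while the active-set prescription $[\tC_\mu]_{ij}=\delta_{ij}$ is itself symmetric, so $\tC_\mu=\tC_\mu^\top$. The substance of the proof is then the analysis of $\vectt{x}^\top\tC_\mu\vectt{x}$ for arbitrary $\vectt{x}\in\mathbb{R}^{n_\rho}$.

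The key reduction is to split $\vectt{x}$ into its active and inactive components. Since $[\tC_\mu]_{ij}=\delta_{ij}$ whenever $i$ or $j$ lies in $\mathcal{A}$, the active rows and columns form an identity block that is decoupled from the inactive block, and the inactive block is the weighted mass matrix $\int_\Omega w\,\eta_i\eta_j\,\dx$. Collecting terms gives
\[
\vectt{x}^\top \tC_\mu \vectt{x} = \sum_{i\in\mathcal{A}} \vectt{x}_i^2 + \int_\Omega w\,\chi_h^2 \,\dx, \qquad \chi_h \coloneqq \sum_{i\in\mathcal{I}} \vectt{x}_i\,\eta_i,
\]
where $w$ denotes the bracketed integrand of \cref{Cmu}. (Because the $\eta_i$ are $\mathrm{DG}_0$ with pairwise disjoint supports the inactive block is in fact diagonal, with entries $\int_{K_i} w\,\dx$, which makes this identity transparent, but I would keep the general formulation since only linear independence of the $\eta_i$ is needed below.) To conclude positive semi-definiteness I would then check $w\ge 0$ pointwise a.e.: the box constraints $0\le\rho_h\le 1$ together with $\epsilon_{\text{log}}>0$ keep both denominators strictly positive, so the two barrier terms are nonnegative for $\mu\ge 0$, while convexity of $\alpha$ (a consequence of \labelcref{alpha5}) gives $\alpha''\ge 0$, making the first term nonnegative. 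Hence every summand is nonnegative and $\tC_\mu$ is positive semi-definite.

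For the definiteness claim I would argue by contradiction: suppose $\vectt{x}^\top\tC_\mu\vectt{x}=0$. The displayed identity forces $\vectt{x}_i=0$ for all $i\in\mathcal{A}$ and $\int_\Omega w\,\chi_h^2\,\dx=0$. If $\mu>0$, then $w\ge \mu(1+\epsilon_{\text{log}})^{-2}>0$ a.e.; if instead $|\vect{u}_h|>0$ a.e., then strong convexity \labelcref{alpha5} supplies a constant $m>0$ with $\alpha''\ge m$, so $w\ge \tfrac12 m|\vect{u}_h|^2>0$ a.e. In either case $w>0$ a.e. forces $\chi_h=0$ a.e., and linear independence of the $\eta_i$ yields $\vectt{x}_i=0$ for $i\in\mathcal{I}$; combined with the active component this gives $\vectt{x}=\vectt{0}$, so $\tC_\mu$ is positive-definite.

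The one step I would treat most carefully, and the only genuine obstacle, is the $\mu=0$ case of the definiteness claim: there the strict positivity of $w$ rests entirely on the \emph{strong} convexity hypothesis \labelcref{alpha5}, since ordinary convexity would give only $\alpha''\ge 0$ and could allow $\alpha''$ to vanish. It is the combination of $\alpha''\ge m>0$ with the assumption $|\vect{u}_h|>0$ a.e. that rescues positivity when the barrier contribution is switched off. Everything else reduces to the standard fact that a weighted $L^2$ mass form is positive semi-definite, and is definite precisely when its weight is positive almost everywhere.
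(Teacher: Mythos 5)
Your proof is correct, and its core coincides with the paper's: both arguments read off the (semi-)definiteness of a weighted $L^2$ mass form from the sign of the bracketed weight in \cref{Cmu}, using $0 \le \rho_h \le 1$ and $\epsilon_{\text{log}} > 0$ to make the barrier terms nonnegative (strictly positive when $\mu > 0$) and assumption \labelcref{alpha5} to handle the $\tfrac12\alpha''(\rho_h)|\vect{u}_h|^2$ term when $\mu = 0$. The one genuine difference is how the active-set modification is treated. The paper first proves the claim for the \emph{unmodified} matrix $\hat{\tC}_\mu$ and then transfers it to $\tC_\mu$ by observing that, since $\rho$ is discretized with $\mathrm{DG}_0$ elements, $\hat{\tC}_\mu$ is diagonal, so zeroing active rows and columns and placing ones on the diagonal manifestly preserves positive (semi-)definiteness. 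You instead decompose the quadratic form of the \emph{modified} matrix directly: the active indices contribute a decoupled identity block and the inactive indices a weighted mass form, so only linear independence of the basis functions is needed. Your route is slightly more general—it would survive a discretization of $\rho$ whose mass matrix is not diagonal (higher-order DG, say)—whereas the paper's diagonality argument is tied to the piecewise-constant choice but is the shortest path given that $\mathrm{DG}_0$ is what is actually used. Your explicit distinction between strong convexity ($\alpha'' \ge m > 0$) and mere convexity ($\alpha'' \ge 0$) in the $\mu = 0$ case is faithful to the role \labelcref{alpha5} plays in the paper's proof.
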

\begin{proof}
The symmetry of $\tC_\mu$ is realized by swapping the indices $i$ and $j$ of the basis functions in its definition and noting that the resulting integrals are equal. Consider the unmodified matrix $\hat{\tC}_\mu$, defined by \cref{Cmu} for all $i$ and $j$. Pick an arbitrary function $\eta_h \in X_{\DG_0}$ with discrete coefficient vector $\vectt{\eta} \in \mathbb{R}^{n_\rho}$. We note that
\begin{equation}
\vectt{\eta}^\top \hat{\tC}_\mu \vectt{\eta} =  \int_\Omega \left[\frac{1}{2} \alpha''(\rho_h)|\vect{u}_h|^2 + \frac{\mu}{(\rho_h + \epsilon_{\text{log}})^2} + \frac{\mu}{(1+ \epsilon_{\text{log}} - \rho_h)^2} \right] |\eta_h|^2 \dx. \label{eq:posdefproof1}
\end{equation}
All the terms in the integral \cref{eq:posdefproof1} are non-negative. Hence, $\hat{\tC}_\mu$ is positive semi-definite. 

Assumption \labelcref{alpha5} implies that $\alpha''(\rho_h) > 0$. If $\mu > 0$, then the rational expressions are strictly greater than zero as $0 \leq \rho_h \leq 1$. Otherwise, if $|\vect{u}_h| > 0$ a.e., then the term $\alpha''(\rho_h) |\vect{u}_h|^2 > 0$ a.e. Hence, if either $\mu>0$ or $|\vect{u}_h| > 0$ a.e.~the right-hand side of \cref{eq:posdefproof1} is equal to zero if and only if $\eta_h = 0$, which is true if and only if $\vectt{\eta} = \vectt{0}$. Therefore, if either $\mu>0$ or $|\vect{u}_h| > 0$ a.e., $\vectt{\eta}^\top \hat{\tC}_\mu \vectt{\eta} \geq 0$ with equality if and only if $\vectt{\eta} = \vectt{0}$. Hence, $\hat{\tC}_\mu$ is symmetric positive-definite. 

Since the discretization for $\rho$ is piecewise constant, $\hat{\tC}_\mu$ is a diagonal matrix and, therefore, all diagonal entries must be positive. The procedure of zeroing rows and columns associated with the BM active set and replacing the diagonal entry with a one will result in a diagonal matrix with non-negative (positive if either $\mu>0$ or $|\vect{u}_h| > 0$ a.e.) diagonal entries. We conclude that $\tC_\mu$ must be symmetric positive semi-definite and if either $\mu>0$ or $\vect{u}_h > 0$ a.e., then it is symmetric positive-definite.
\end{proof}

\begin{remark}
\label{rem:B-rank}
For problems with a pure Dirichlet boundary condition, $\tH_{\rho,\vect{u},p,\lambda}$ will have a nullspace of at least one dimension associated with the fact that the pressure is only unique up to a constant \cite[Ch.~4]{Elman2014}. Since we will use an FGMRES Krylov method on the outermost level of the linear solve, this particular nullspace can be handled by FGMRES without the method breaking down \cite[Ch.~9.3.5]{Elman2014}. To show that the nullspace is indeed the space of constants with respect to the pressure, we assume that $\tens{B}$ has been modified so that it is of full row rank, e.g.~by picking a pressure degree of freedom and fixing it to zero. 
\end{remark}

\begin{proposition}
Suppose that either $\mu > 0$ or $|\vect{u}_h| > 0$ a.e. Moreover, assume that $\tA - \tD \tC_\mu^{-1} \tD^\top$ is symmetric positive-definite and $\tB$ has been modified so that it has full row rank. Then, the matrix  $\tH_{\rho,\vect{u},p,\lambda}$, as defined in \cref{eq:DiscretizedNewton}, is invertible.
\end{proposition}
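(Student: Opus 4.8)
The plan is to prove invertibility by showing that the only vector in the kernel of $\tH_{\rho,\vect{u},p,\lambda}$ is the zero vector; since the matrix is square, this is equivalent to invertibility. The structural observation driving the argument is that \cref{eq:DiscretizedNewton} is a saddle-point system whose leading ``primal'' block, coupling $\rho$ and $\vect{u}$, is
\[
\tM \coloneqq \begin{pmatrix} \tC_\mu & \tD^\top \\ \tD & \tA \end{pmatrix},
\]
while the rows involving $\tB$ and $\tE$ act as constraints. The first step is to establish that $\tM$ is symmetric positive-definite. Symmetry is inherited from $\tC_\mu$ (\cref{prop:CmuActiveSet}), $\tA$ (\cref{prop:posdef}), and the transpose pairing of the off-diagonal blocks. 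For definiteness I would appeal to the nested Schur-complement criterion: the hypothesis $\mu>0$ or $|\vect{u}_h|>0$ a.e.\ makes $\tC_\mu$ SPD and hence invertible by \cref{prop:CmuActiveSet}, and the Schur complement of $\tC_\mu$ in $\tM$ is exactly $\tA - \tD\tC_\mu^{-1}\tD^\top$, which is SPD by assumption; the block $LDL^\top$ factorization of $\tM$ then certifies that $\tM$ is SPD.

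With $\tM$ in hand, suppose $(\vectt{\rho},\vectt{u},\vectt{p},\vectt{\lambda})$ lies in the kernel, so that
\begin{align*}
\tC_\mu \vectt{\rho} + \tD^\top \vectt{u} + \tE^\top \vectt{\lambda} &= \vzero, &
\tD \vectt{\rho} + \tA \vectt{u} + \tB^\top \vectt{p} &= \vzero, \\
\tB \vectt{u} &= \vzero, &
\tE \vectt{\rho} &= 0.
\end{align*}
The key cancellation is that the multiplier couplings disappear under testing against the primal variables: pairing the first equation with $\vectt{\rho}$ and the second with $\vectt{u}$, the terms $\vectt{\rho}^\top \tE^\top \vectt{\lambda} = (\tE\vectt{\rho})^\top\vectt{\lambda}$ and $\vectt{u}^\top \tB^\top \vectt{p} = (\tB\vectt{u})^\top\vectt{p}$ both vanish by the last two equations. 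Summing the two remaining identities gives precisely the quadratic form
\[
\begin{pmatrix} \vectt{\rho} \\ \vectt{u} \end{pmatrix}^\top \tM \begin{pmatrix} \vectt{\rho} \\ \vectt{u} \end{pmatrix} = 0,
\]
and the positive-definiteness of $\tM$ forces $\vectt{\rho} = \vzero$ and $\vectt{u} = \vzero$.

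It then remains to recover the multipliers. Substituting $\vectt{\rho}=\vectt{u}=\vzero$ into the second equation leaves $\tB^\top\vectt{p}=\vzero$; since $\tB$ has been modified to have full row rank, $\tB^\top$ is injective and $\vectt{p}=\vzero$. Likewise the first equation reduces to $\tE^\top\vectt{\lambda}=\vzero$, and because $\tE$ is a nonzero row vector --- each inactive entry equals $-r\int_\Omega \eta_j\,\dx$, i.e.\ $-r$ times the positive volume of element $j$ --- we conclude $\vectt{\lambda}=0$. I expect the main obstacle to be the first step, the positive-definiteness of $\tM$: this does not follow from the sign pattern of the off-diagonal blocks and must be extracted by chaining the two Schur-complement conditions, one furnished by \cref{prop:CmuActiveSet} and one supplied as a hypothesis. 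The cancellation of the constraint terms and the final back-substitution are then routine, the only delicate point being the (generically satisfied) nondegeneracy $\tE\neq\tzero$.
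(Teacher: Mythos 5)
Your proof is correct, and while its first step matches the paper, the way you conclude invertibility takes a genuinely different route. Both you and the paper begin by establishing that the leading block $\tens{G} = \begin{pmatrix} \tC_\mu & \tD^\top \\ \tD & \tA \end{pmatrix}$ is symmetric positive-definite from positive-definiteness of $\tC_\mu$ (\cref{prop:CmuActiveSet}) together with the assumed positive-definiteness of the Schur complement $\tA - \tD\tC_\mu^{-1}\tD^\top$; the paper cites \cite[Th.~1.12]{Zhang2006} where you write out the block $LDL^\top$ congruence, which is the same criterion (and, incidentally, your version makes clear that the paper's additional appeal to \cref{prop:posdef} for positive-definiteness of $\tA$ is not needed). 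The divergence comes next: the paper re-blocks $\tH_{\rho,\vect{u},p,\lambda}$ as a double saddle point matrix with constraint blocks $\tens{B}_0 = \begin{pmatrix}\tzero & \tB\end{pmatrix}$ and $\tens{E}_0 = \begin{pmatrix}\tE & \tzero\end{pmatrix}$, verifies that both have full row rank and that $\mathrm{range}(\tens{B}_0^\top) \cap \mathrm{range}(\tens{E}_0^\top) = \{\vzero\}$, and then invokes the external invertibility result \cite[Prop.~2.3]{Ali2018}, whereas you give a self-contained kernel argument: testing the first two kernel equations against the primal variables cancels the multiplier couplings via the constraint equations, positive-definiteness of $\tM$ forces $\vectt{\rho} = \vzero$ and $\vectt{u} = \vzero$, and back-substitution recovers $\vectt{p} = \vzero$ and $\vectt{\lambda} = 0$ from the full row rank of $\tB$ and from $\tE \neq \tzero$. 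Your route is more elementary and shows exactly where each hypothesis acts; in effect it re-proves the special case of the cited double-saddle-point result, with the paper's range-intersection condition appearing in your argument as the clean decoupling of the back-substitution (the $\tB$-constraint touches only $\vectt{u}$, the $\tE$-constraint only $\vectt{\rho}$). The paper's route is shorter on the page and situates the result within a general theory that would extend to less structured couplings. Note that both arguments rest on the same nondegeneracy assumption $\tE \neq \tzero$, i.e.\ a nonempty inactive set for the density degrees of freedom; you flag this explicitly, while the paper simply asserts that $\tens{E}_0$ is a nonzero row vector.
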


\begin{proof}
If either $\mu > 0$ or $|\vect{u}_h| > 0$ a.e.~then $\tC_\mu$ is symmetric positive-definite by \cref{prop:CmuActiveSet} and, therefore, invertible. Thus the matrix $\tA - \tD \tC_\mu^{-1} \tD^\top$ is well-defined. By assumption $\tA - \tD \tC_\mu^{-1} \tD^\top$ is symmetric positive-definite. Therefore, since  $\tA$ is symmetric positive-definite by \cref{prop:posdef}, we have that the following matrix is also symmetric positive-definite \cite[Th.~1.12]{Zhang2006}:
\begin{align}
\tens{G} \coloneqq
\begin{pmatrix}
\tC_\mu & \tD^\top \\
\tD        & \tA
\end{pmatrix}.
\end{align}
We define $\tens{B}_0 \coloneqq \begin{pmatrix}\tens{0} & \tens{B} \end{pmatrix} \in \mathbb{R}^{n_p \times(n_\rho  + n_{\vect{u}})}$ and $\tens{E}_0 \coloneqq \begin{pmatrix}\tens{E} & \tens{0} \end{pmatrix}\in \mathbb{R}^{1 \times (n_\rho  + n_{\vect{u}})}$  and re-block $\tH_{\rho,\vect{u},p,\lambda}$ as follows:
\begin{align}
\tH_{\rho,\vect{u},p,\lambda} = \begin{pmatrix}
\tens{G} & \tens{B}_0^\top & \tens{E}^\top_0 \\
\tens{B}_0 & \tens{0} & \tens{0} \\
\tens{E}_0 & \tens{0} & \tens{0}
\end{pmatrix}.
\label{eq:re-block}
\end{align}
The matrix on the right-hand side of \cref{eq:re-block} is of double-saddle point type. $\tE_0$ is a nonzero row vector and thus must have full row rank. Moreover, by assumption $\tB$ has been modified so that it has full row rank. Hence, $\tB_0$ also has full row rank. Consider any vector $\vectt{x} = (\vectt{x}_1^\top \;\;\; \vectt{x}_2^\top)^\top \in \mathbb{R}^{n_\rho  + n_{\vect{u}}}$. We note that $\tE_0^\top \vectt{x}^\top = \tE^\top \vectt{x}_1^\top$ and $\tB_0^\top \vectt{x}^\top = \tB^\top \vectt{x}_2^\top$. Hence, $\mathrm{range}(\tens{B}_0^\top) \cap \mathrm{range}(\tens{E}_0^\top) = \{\vect{0}\}$. Thus $\tH_{\rho,\vect{u},p,\lambda}$ is invertible \cite[Prop.~2.3]{Ali2018}. 
\end{proof}

\begin{remark}
\label{rem:Schur-pos-def}
The proof of the invertibility of $\tH_{\rho,\vect{u},p,\lambda}$ (provided the pressure null space has been eliminated) relied heavily on the assumption that $\tA - \tD \tC_\mu^{-1} \tD^\top$ is symmetric positive-definite. Although the symmetry is guaranteed, positive-definiteness is not. Numerically, we found that $\tA - \tD \tC_\mu^{-1} \tD^\top$ is often positive-definite for large ranges of $\mu$, choices of $\alpha$, and BM iterates $\rho_{h,k}$ and $\vect{u}_{h,k}$. Despite this, there are choices where $\tA - \tD \tC_\mu^{-1} \tD^\top$ has negative eigenvalues. In practice we have not encountered a case where $\tA - \tD \tC_\mu^{-1} \tD^\top$ is singular. 
\end{remark}

\begin{remark}
\label{rem:Schur-indefinite}
A full proof of the invertibility of $\tH_{\rho,\vect{u},p,\lambda}$ in the case where $\tA - \tD \tC_\mu^{-1} \tD^\top$ is indefinite or singular is beyond the scope of this work. However, we note the following. The first-order optimality conditions \cref{foc:lag1}--\cref{foc:lag4} can be rewritten as a semismooth system of equations. Suppose, after discretization, we denote this system $F :\mathbb{R}^{n_{\vect{u}} + 3 n_\rho + n_p + 1} \to \mathbb{R}^{n_{\vect{u}} + 3 n_\rho + n_p + 1}$ where the extra $2 n_\rho$ degrees of freedom are associated with the Lagrange multipliers that enforce the box constraints on $\rho$. Then, provided one can show that $F$ is locally Lipschitz continuous, $F$ is semismooth and $F'(\vectt{z}_*)$ is invertible at $\vectt{z}_*$ where $\vectt{z}_*$ satisfies $F(\vectt{z}_*) = 0$, then one may be able to invoke a semismooth analogue of the Rall--Rheinboldt theory \cite{Farrell2019}. Hence, provided the iterate $\vectt{z}_k$ is sufficiently close to $\vectt{z}_*$, then $F'(\vectt{z}_k)$ is invertible. However, we note that although the BM solver has been shown to be almost equivalent to a semismooth Newton method for linear elliptic problems in \cite[App.~A]{Papadopoulos2021a}, that theory does not extend to the systems we are solving here. Hence the invertibility of $F'(\vectt{z}_k)$ would not necessarily imply the invertibility of $\tH_{\rho,\vect{u},p,\lambda}$.
\end{remark}

\section{Preconditioning}
\label{sec:preconditioning}
In this section, we develop a preconditioner for solving \cref{eq:DiscretizedNewton}. As discussed in \cref{sec:deflation}, preconditioning strategies that are robust for the undeflated system can also be used to compute solutions of the deflated systems.  On the outermost level of the deflated barrier method, we perform continuation in the barrier parameter $\mu$. Next, at a given $\mu$, we use the BM solver to find a solution of \cref{dfoc1}--\cref{dfoc4}.
A direct sparse LU factorization of the matrix in \cref{eq:DiscretizedNewton} is infeasible on fine meshes of three-dimensional problems. Thus we turn to preconditioning techniques to reduce the cost of each inner linear solve. The preconditioning is made difficult by the saddle point nature of the matrix in \cref{eq:DiscretizedNewton} and the barrier-like terms in $\tC_\mu$. In the following subsections we introduce a nested block preconditioning method for solving \cref{eq:DiscretizedNewton}, where the Schur complements are controlled with an augmented Lagrangian term.
As outermost Krylov solver, we use a preconditioned FGMRES method \cite{Saad1993}.

\subsection{Block preconditioning}
\label{sec:explore}
Consider the well-posed linear system 
\begin{align}
\begin{pmatrix}
\mathbb{A} & \mathbb{B}\\
\mathbb{C} & \mathbb{D}
\end{pmatrix}
\begin{pmatrix}
\vectt{x} \\
\vectt{y}
\end{pmatrix}
=
\begin{pmatrix}
\vectt{c} \\
\vectt{d}
\end{pmatrix},
\label{eq:linearsystem}
\end{align}
where $\mathbb{A} \in \mathbb{R}^{n_1 \times n_1}$ is invertible, $\mathbb{B} \in \mathbb{R}^{n_1 \times n_2}$, $\mathbb{C} \in \mathbb{R}^{n_2 \times n_1}$ and $\mathbb{D} \in \mathbb{R}^{n_2 \times n_2}$. Then, under suitable conditions~\cite[\S 3.2]{Benzi2005} the inverse of the matrix in \cref{eq:linearsystem} admits a full block factorization of the form
\begin{align}
\begin{pmatrix}
\mathbb{A} & \mathbb{B}\\
\mathbb{C} & \mathbb{D}
\end{pmatrix}^{-1}
= 
\begin{pmatrix}
I  & -\mathbb{A}^{-1} \mathbb{B}\\
0& I 
\end{pmatrix}
\begin{pmatrix}
\mathbb{A}^{-1}  &0 \\
0& \mathbb{S}^{-1} 
\end{pmatrix}
\begin{pmatrix}
I  &  0\\
-\mathbb{C} \mathbb{A}^{-1} & I 
\end{pmatrix},
\label{eq:blockprecon}
\end{align}
where $\mathbb{S} = \mathbb{D} - \mathbb{C} \mathbb{A}^{-1} \mathbb{B}$. Preconditioners for \cref{eq:linearsystem} can be found by developing cheap approximations to $\mathbb{A}^{-1}$ and $\mathbb{S}^{-1}$ and substituting them into \cref{eq:blockprecon} \cite{Murphy2000, Wathen2015}. 

The subspace spanned by the volume constraint Lagrange multiplier $\lambda$ is one-dimensional and can be handled by at most one iteration of a Krylov subspace solver or via block preconditioning. Experimentally, we found that a full block preconditioner  of the real block performed best. Writing the density-momentum-pressure block,
\begin{align}
\tH_{\rho, \vect{u}, p} \coloneqq 
\begin{pmatrix}
\tC_\mu & \tD^\top                        & \tzero          \\
\tD        & \tA  & \tB^\top       \\
\tzero         & \tB                                & \tzero                
\end{pmatrix},
\label{rho-u-p-block}
\end{align}
we choose $\mathbb{A} = \tH_{\rho, \vect{u}, p}$, $\mathbb{B} = \tE^\top$, $\mathbb{C} = \tE$, and $\mathbb{D} = \tzero$. The Schur complement, 
\begin{align}
\mathbb{S} = \tS_0\coloneqq -\tE \tH_{\rho, \vect{u}, p}^{-1} \tE^\top, \label{eq:lambdaSchurComplement}
\end{align}
is a $1 \times 1$ matrix and can be inverted by taking its reciprocal. Hence, the difficulty now lies in solving linear systems involving \cref{rho-u-p-block}. Since we have only decreased the size of the linear system by one dimension, an LU factorization is still infeasible and we consider block preconditioners for \cref{rho-u-p-block}. We summarize the initial components of the solver in \cref{fig:flowbeginning}.
\begin{figure}[ht]
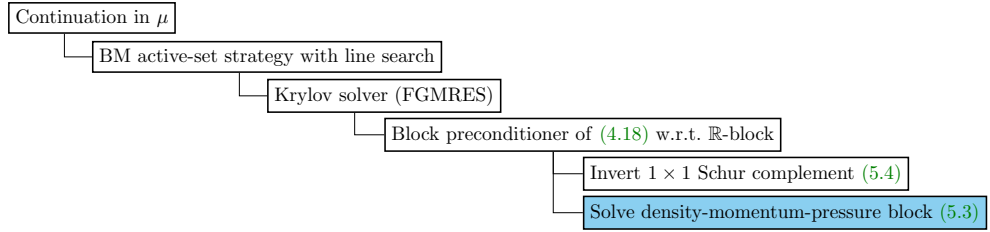

\centering
\includestandalone[width=\textwidth]{figures/preconditioning/flowchart-beginning}
\caption{Main components of the deflated barrier method solver. The remainder of this section focuses on developing preconditioners for the item in blue.}\label{fig:flowbeginning}
\end{figure}

We precondition \eqref{rho-u-p-block} by taking the Schur complement with respect to the momentum-pressure block. This approach was utilized by Evgrafov for preconditioning the linear systems arising in a similar solver \cite[Sec.~5]{Evgrafov2014}. In the notation of \cref{eq:linearsystem}, $\mathbb{A} = \tC_\mu$, $\mathbb{B} = (\tD^\top \;\; \tzero)$, $\mathbb{C} = (\tD \;\; \tzero)^\top$ and,
\begin{align}
\mathbb{D} = \tens{Q} \coloneqq
\begin{pmatrix}
\tA   & \tB^\top       \\
\tB   & \tzero                
\end{pmatrix}.
\label{eq:approxSchur1}
\end{align}
We know that $\tC_\mu$ is invertible by \cref{prop:CmuActiveSet}. Hence, we write $\mathbb{A}^{-1} = \tC_\mu^{-1}$.
The matrix $\tens{Q}$ resembles the linear system that arises in the discretization of the Stokes--Brinkman equations.
The Schur complement is given by
\begin{align}
\mathbb{S} = \tS_1 \coloneqq 
\tens{Q}
- 
\begin{pmatrix}
\tD \\
\tzero               
\end{pmatrix}
\tC_\mu^{-1}
\begin{pmatrix}
\tD ^\top & \tzero                
\end{pmatrix}
= 
\begin{pmatrix}
\tA - \tD \tC_\mu^{-1} \tD^\top  & \tB^\top       \\
\tB                                & \tzero               
\end{pmatrix}.
\label{eq:Schur1}
\end{align}
The reason we use a  $\mathrm{DG}_0$ piecewise constant discretization for the material distribution is to ensure that $\tS_1$ is sparse. Since the material distribution is discretized with $\mathrm{DG}_0$ finite elements, $\tC_\mu$ is a diagonal matrix, $\tD \tC_\mu^{-1} \tD^\top$ is still sparse and hence $\tS_1$ is also sparse. 

Thus far, the application of block preconditioning has reduced the solution of the full matrix \cref{eq:DiscretizedNewton} to the following:
\begin{enumerate}
\itemsep=0pt
\item Apply an outer FGMRES solver;
\item Apply the reciprocal of $\tS_0 \in \mathbb{R}$;
\item Invert the diagonal matrix $\tC_\mu$;
\item Apply the inverse of the $2 \times 2$ block matrix $\tS_1$, which is the same size as the matrix that arises in a discretized pure Stokes problem. 
\end{enumerate}
For now we assume that $\tS_1$ is invertible and defer discussions of invertibility to later.

We must now develop solvers for $\tS_1$ as given in \cref{eq:Schur1}. One option is to use a direct solver. However, we can further reduce the computational work with another application of block preconditioning. Consider taking the inner Schur complement in $\tS_1$ with respect to the pressure block. Using the notation of \cref{eq:linearsystem}, $\mathbb{B} = \tB^\top$, $\mathbb{C} = \tB$, and $\mathbb{A} = \tA - \tD \tC_\mu^{-1} \tD^\top$. The innermost Schur complement takes the form
\begin{align}
\mathbb{S} =\tS_2 \coloneqq -\tB(\tA - \tD \tC_\mu^{-1} \tD^\top)^{-1}\tB^\top.
\label{eq:schur2}
\end{align}
This time, $\tS_2$ is dense, and we employ an augmented Lagrangian approach.

We first recall some known results for the Stokes and Stokes--Brinkman equations. Let $-\Delta_h$ denote the negative discretized Laplacian matrix. In the context of the incompressible Stokes equations, $\tB(-\Delta_h)^{-1}\tB^\top$ is spectrally equivalent to the viscosity-scaled pressure mass matrix $\nu^{-1}\tM_p$ \cite{Silvester1994, Wathen1993}, see also \cite[Th.~5.22]{Elman2014}. $\tM_p$ is a sparse mass matrix and can be cheaply factorized or solved with a multigrid method. Therefore, a good approximation to the Schur complement of the pure Newtonian Stokes problem is given by $\nu^{-1} \tM_p$. The idea is that the momentum block can then be solved with a direct solver, multigrid methods, or other alternative solvers. Unfortunately, in the context of the Stokes--Brinkman equations, Popov \cite{Popov2012} noted that the presence of the Brinkman term $\alpha(\rho_h)\vect{u}_h$ in the momentum block $\tA$ renders the approximation given by $\nu^{-1} \tM_p$ ineffective. Popov proposed a Schur complement preconditioning technique based on incomplete LU factorization, but such factorizations do not generally yield mesh-independent preconditioners. An alternative is a preconditioning scheme utilized by Borrvall and Petersson in their original paper \cite[Sec.~2.6]{Borrvall2003} based on the work of Cahouet and Chabard \cite{Cahouet1988}. However, during numerical experiments, we found that an augmented Lagrangian approach performed better for the problems in this work, see \cite[Ch.~5.3.1]{Papadopoulos2021e}.

We now propose an augmented Lagrangian strategy to control the second Schur complement $\tS_2$. The essential idea is to add a term to the momentum equation that does not change the exact solution, but does change the Schur complement $\tS_2$; in particular, by scaling the augmentation appropriately, $\tS_2$ can be almost perfectly approximated with a scaled pressure mass matrix. The cost of this approach is that it makes the augmented momentum block more difficult to solve. The augmented Lagrangian approach has been shown to be robust for a variety of difficult saddle-point systems such as the stationary Navier--Stokes equations at high Reynolds number \cite{Farrell2019a}, implicitly-constituted anisothermal non-Newtonian flow \cite{Farrell2020d}, and magnetohydrodynamics \cite{Laakmann2021}. Hence, this approach has potential for extension to different fluid topology optimization problems.

There are two possible augmented Lagrangian approaches: continuous and discrete. These approaches are mathematically equivalent for exactly divergence-free elements such as the one employed here. We choose to introduce the method in the discrete setting. Post-discretization, the matrix $\tA$ in \cref{eq:DiscretizedNewton} is 
modified by adding an augmented Lagrangian term
\begin{align}
\tA_{\gamma_d} \coloneqq \tA + \gamma_d \tB^\top \tM_p^{-1} \tB,
\end{align}
where $\gamma_d \gg 0$, and the right-hand side of \cref{eq:DiscretizedNewton} is modified so that the solution of linear system remains unchanged (since $\tB\vectt{u}$ is known). In particular, if the current velocity iterate is divergence-free, then $\gamma_d \tB^\top \tM_p^{-1} \tB \delta \vectt{u} = \vectt{0}$ and no modification to the right-hand side is required. While it does not change the solution, the addition of the augmented Lagrangian term influences the nature of the inner Schur complement. In particular, $\tS_1$ becomes
\begin{align}
\tS_{1, \gamma_d} = 
\begin{pmatrix}
\tA_{\gamma_d} - \tD \tC_\mu^{-1} \tD^\top  & \tB^\top       \\
\tB                                & \tzero               
\end{pmatrix},
\label{eq:Schur1gammad}
\end{align}
and $\tS_2$ becomes
\begin{align}
\begin{split}
\tS_{2,\gamma_d} = -\tB(\tA_{\gamma_d} - \tD \tC_\mu^{-1} \tD^\top)^{-1}\tB^\top.
\label{eq:S2approx}
\end{split}
\end{align}

\begin{remark}
As noted in \cref{rem:Schur-indefinite}, it is possible that $\tA_{\gamma_d} - \tD \tC_\mu^{-1} \tD^\top$ is indefinite and, therefore, it is unclear if $\tS_{1, \gamma_d}$ is invertible. If we were to encounter such a case, then the BM linear system solve would fail. Here, the deflated barrier method would repeatedly halve the step size in the barrier parameter $\mu$ (keeping the initial guess the same). We did not observe this behaviour in our numerical experiments. Henceforth, we assume that $\tS_{1, \gamma_d}$ is invertible. We note that if $\tS_{1,\gamma_d}$ is invertible, then $\tS_{2,\gamma_d}$ is invertible. This is seen by applying Sylvester's law of inertia to the Schur complement decomposition \cref{eq:blockprecon} \cite[Ch.~4]{Elman2014}. 
\end{remark}

The action of $\tS^{-1}_{2,\gamma_d}$ is required during the solve. As already mentioned, $\tS_{2,\gamma_d}$ is dense and, hence, it is expensive to assemble and apply its inverse. In the next proposition we show that the action of $\tS^{-1}_{2,\gamma_d}$ can be approximated with $-\gamma_d \tM^{-1}_p$ with increasing accuracy as $\gamma_d \to \infty$. In practice, values of $\gamma_d \sim \mathcal{O}(10^5)$ were sufficient to approximate the action of $\tS^{-1}_{2,\gamma_d}$ with $-\gamma_d \tM^{-1}_p$ to tolerances of $\mathcal{O}(10^{-10})$. 

\begin{proposition}
\label{prop:Schur-Mp}
Suppose that $\tA - \tD \tC_\mu^{-1} \tD^\top$, $\tA_{\gamma_d} - \tD \tC_\mu^{-1} \tD^\top$, $\tS_2$ and $\tS_{2,\gamma_d}$ are invertible and $\tB$ has been modified to ensure it has full rank (see \cref{rem:B-rank}). Consider the eigenvalue problem
\begin{align}
\gamma_d \tM_p^{-1} \tS_{2,\gamma_d} \vectt{q}_i = \lambda_i \vectt{q}_i, \;\; i = 1,\dots,n_p.
\label{eq:evalue1}
\end{align}
Then, $\lambda_i \to -1$, $i = 1,\dots,n_p$ as $\gamma_d \to \infty$. 
\end{proposition}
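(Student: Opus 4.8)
The plan is to derive an exact closed form for the matrix $\gamma_d \tM_p^{-1}\tS_{2,\gamma_d}$ and then pass to the limit $\gamma_d\to\infty$. Write $\tens{N}\coloneqq\tA-\tD\tC_\mu^{-1}\tD^\top$ for the reduced momentum block and $\tens{K}\coloneqq\tB\tens{N}^{-1}\tB^\top$, so that $\tens{K}=-\tS_2$ is invertible by the hypothesis that $\tS_2$ is invertible. Since $\tA_{\gamma_d}-\tD\tC_\mu^{-1}\tD^\top=\tens{N}+\gamma_d\tB^\top\tM_p^{-1}\tB$, the augmented Schur complement is $\tS_{2,\gamma_d}=-\tB(\tens{N}+\gamma_d\tB^\top\tM_p^{-1}\tB)^{-1}\tB^\top$, and I would attack the inner inverse with the Sherman--Morrison--Woodbury identity.

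First I would invoke Woodbury with $A=\tens{N}$, $U=\tB^\top$, $C=\gamma_d\tM_p^{-1}$, and $V=\tB$. Because $\tens{N}$, $\tM_p$, and $\tA_{\gamma_d}-\tD\tC_\mu^{-1}\tD^\top$ are all invertible (the last by hypothesis), the capacitance matrix $\tens{R}\coloneqq\gamma_d^{-1}\tM_p+\tens{K}$ is automatically invertible and
\[
(\tens{N}+\gamma_d\tB^\top\tM_p^{-1}\tB)^{-1}=\tens{N}^{-1}-\tens{N}^{-1}\tB^\top\tens{R}^{-1}\tB\tens{N}^{-1}.
\]
Conjugating by $\tB$ and $\tB^\top$ gives $\tS_{2,\gamma_d}=-\tens{K}+\tens{K}\tens{R}^{-1}\tens{K}$. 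Using $\tens{K}-\tens{R}=-\gamma_d^{-1}\tM_p$, this telescopes to
\[
\tS_{2,\gamma_d}=\tens{K}\tens{R}^{-1}(\tens{K}-\tens{R})=-\gamma_d^{-1}\tens{K}\tens{R}^{-1}\tM_p,
\]
so that $\gamma_d\tM_p^{-1}\tS_{2,\gamma_d}=-\tM_p^{-1}\tens{K}\tens{R}^{-1}\tM_p$. In particular this matrix is a similarity transform of $-\tens{K}\tens{R}^{-1}=-\tens{K}(\gamma_d^{-1}\tM_p+\tens{K})^{-1}$, hence has the same spectrum.

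Finally I would pass to the limit. As $\gamma_d\to\infty$ we have $\tens{R}=\gamma_d^{-1}\tM_p+\tens{K}\to\tens{K}$, and since $\tens{K}$ is invertible, continuity of matrix inversion at an invertible matrix gives $\tens{R}^{-1}\to\tens{K}^{-1}$ and hence $\tens{K}\tens{R}^{-1}\to\mathcal{I}$. Thus $\gamma_d\tM_p^{-1}\tS_{2,\gamma_d}\to-\mathcal{I}$ entrywise. Since the eigenvalues of a matrix depend continuously on its entries (they are the roots of the characteristic polynomial), the $n_p$ eigenvalues $\lambda_i$ of $\gamma_d\tM_p^{-1}\tS_{2,\gamma_d}$ all converge to the unique eigenvalue $-1$ of $-\mathcal{I}$, which is the claim.

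The only delicate points are bookkeeping rather than genuine obstacles: I must ensure every inverse used is legitimate (the hypotheses supply invertibility of $\tens{N}$, of $\tS_2=-\tens{K}$, and of the augmented block, and $\tM_p$ is an invertible mass matrix, while invertibility of $\tens{R}$ for all sufficiently large $\gamma_d$ also follows from $\tens{R}\to\tens{K}$), and I must resist any symmetric/positive-definite eigenvalue argument, since $\tens{K}$ may be indefinite when $\tens{N}$ is (cf.\ \cref{rem:Schur-pos-def}). The norm-continuity argument above avoids any definiteness assumption and so is the step I would be most careful to phrase correctly.
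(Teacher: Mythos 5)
Your proof is correct, and it takes a genuinely different route from the paper's, although both rest on the same underlying algebraic fact. The paper does not perform any Woodbury computation: it quotes the inverse identity $-\tS^{-1}_{2,\gamma_d} = -\tS^{-1}_{2} + \gamma_d \tM_p^{-1}$ directly from Benzi and Olshanskii \cite[Lem.~4.1]{Benzi2006}, substitutes it into the eigenvalue problem \cref{eq:evalue1}, and deduces that each eigenvector $\vectt{q}_i$ of $\gamma_d\tM_p^{-1}\tS_{2,\gamma_d}$ is an eigenvector of the \emph{fixed} matrix $\tS_2^{-1}\tM_p$ with eigenvalue $\delta_i = \gamma_d(\lambda_i^{-1}+1)$; since the $\delta_i$ range over a finite set independent of $\gamma_d$, it follows that $\lambda_i^{-1}+1 = \delta_i/\gamma_d \to 0$. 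You instead prove the key identity from scratch: your closed form $\tS_{2,\gamma_d} = -\gamma_d^{-1}\tens{K}\tens{R}^{-1}\tM_p$ is equivalent to the cited lemma (inverting it gives $\tS_{2,\gamma_d}^{-1} = -\tens{K}^{-1}-\gamma_d\tM_p^{-1} = \tS_2^{-1}-\gamma_d\tM_p^{-1}$, which is exactly \cref{eq:evalue2}), and you then conclude by the norm limit $\tens{K}\tens{R}^{-1}\to\tens{I}$ together with continuity of eigenvalues in the matrix entries. What each buys: your version is self-contained, with the Benzi--Olshanskii lemma dropping out as a corollary of the Woodbury step, and your treatment of the capacitance matrix is sound --- its invertibility for the given $\gamma_d$ follows from the invertibility of $\tens{N}$, $\gamma_d\tM_p^{-1}$, and the augmented block via the generalized determinant lemma, or more simply from $\tens{R}\to\tens{K}$ for all sufficiently large $\gamma_d$, which is all the limit needs. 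The paper's version, once the lemma is granted, is shorter and quantitatively sharper: the exact relation $\lambda_i = \gamma_d/(\delta_i-\gamma_d)$ exhibits the convergence rate $|\lambda_i+1| = \mathcal{O}(\gamma_d^{-1})$, which your soft continuity argument does not state, although your closed form would also yield it since $\tens{I}-\tens{K}\tens{R}^{-1} = \gamma_d^{-1}\tM_p\tens{R}^{-1}$. Both arguments correctly avoid any symmetry or definiteness assumption on $\tA - \tD\tC_\mu^{-1}\tD^\top$, which is essential given \cref{rem:Schur-pos-def}, and you were right to flag this as the point requiring care.
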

\begin{proof}
We note that $\tM_p$ is symmetric positive-definite and, therefore, invertible. Our assumptions satisfy the requirements of \cite[Lem.~4.1]{Benzi2006} and thus
\begin{align}
-\tS^{-1}_{2,\gamma_d} =-\tS^{-1}_{2} + \gamma_d \tM_p^{-1}. \label{eq:evalue2}
\end{align}
By left multiplying \cref{eq:evalue1} by $\gamma_d^{-1} \tM_p$ then $\tS^{-1}_{2,\gamma_d}$ and applying \cref{eq:evalue2} we see that
\begin{align}
\lambda_i \left[ \tS_2^{-1}  \gamma_d^{-1} \tM_p - \tens{I} \right] \vectt{q}_i = \vectt{q}_i. \label{eq:evalue3}
\end{align}
As $ \tM_p^{-1} \tS_{2,\gamma_d} $ is invertible, then $\lambda_i \neq 0$ for all $i = 1,\dots,n_p$. Hence, by rearranging \cref{eq:evalue3} we observe that
\begin{align}
\tS_2^{-1} \tM_p \vectt{q}_i = \gamma_d ( \lambda_i^{-1} + 1) \vectt{q}_i.
\end{align}
Since $\tS_2^{-1} \tM_p$ is invertible, its eigenvalues (denoted $\delta_i$) are nonzero, and satisfy
\begin{align}
\delta_i = \gamma_d (\lambda_i^{-1} +1), \;\; i = 1,\dots,n_p.
\end{align}
Therefore, by taking the limit $\gamma_d \to \infty$ we conclude that $\lambda_i \to -1$ for all $i =  1,\dots,n_p$.
\end{proof}

If assembled na\"ively, the triple matrix product $\tB^\top \tM_p^{-1} \tB$, as it occurs in the augmented Lagrangian term, is expensive to compute. However, it can be verified that the augmented Lagrangian term $\gamma_d \tB^\top \tM_p^{-1} \tB$ corresponds to augmenting the weak form $a_{h}(\vect{u}_h,\vect{v}_h; \rho_h)$ in \cref{dfoc2} by
\begin{align}
\gamma_d \int_\Omega \Pi (\divv{\vect{u}_h}) \Pi (\divv{\vect{v}_h})  \dx,
\label{eq:proj}
\end{align}
where $\Pi$ is the projection onto the discretized pressure space. The projection is the identity for the BDM-DG pair. Therefore, assembling $\tB \tM_p^{-1} \tB^\top$ is equivalent to assembling the matrix associated with the bilinear form $\int_\Omega \divv{\vect{\phi}_i} \divv{\vect{\phi}_j} \dx$, where $\vect{\phi}_i$, $i = 1, \dots, n_{\vect{u}}$, are the basis functions of the velocity finite element space. 

With the proposed nested block preconditioning, we have reduced solving linear systems involving \cref{eq:DiscretizedNewton} into the following steps:
\begin{enumerate}
\itemsep=0pt
\item Apply an outer FGMRES solver;
\item Apply the reciprocal of $\tS_0 \in \mathbb{R}$;
\item Invert the diagonal matrix $\tC_\mu$;
\item Factorize and solve the block-diagonal pressure mass matrix $\tM_p$;
\item Apply the action of the inverse of the augmented momentum block $\tA_{\gamma_d} - \tD \tC_\mu^{-1} \tD^\top$.
\end{enumerate}
Factorizing $\tM_p$, and $\tA_{\gamma_d} - \tD \tC_\mu^{-1} \tD^\top$ with a direct solver such as MUMPS~\cite{mumps} is faster than factorizing the full matrix in \cref{eq:DiscretizedNewton}. We note that $\tM_p$ only needs to be factorized once at the start of the algorithm. With ideal inner solvers, most of the computational time during the run of the deflated barrier method is spent on factorizing $\tA_{\gamma_d} - \tD \tC_\mu^{-1} \tD^\top$ at each BM iteration.

In \cref{fig:flowauglag}, we summarize the block preconditioning strategy for solving linear systems involving the density-momentum-pressure block \cref{rho-u-p-block}. In the next section we develop a specialized geometric multigrid scheme to efficiently solve linear systems involving $\tA_{\gamma_d} - \tD \tC_\mu^{-1} \tD^\top$ (highlighted in pink in \cref{fig:flowauglag}) in order to reduce the computational time further when the problem is discretized on a fine mesh. 
\begin{figure}[ht]
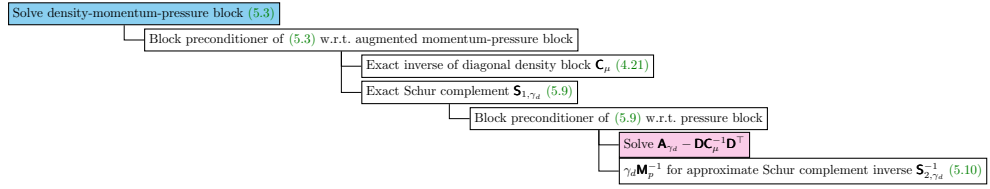

\centering
\includestandalone[width=\textwidth]{figures/preconditioning/flowchart-augmented-lagrangian}
\caption{The preconditioning strategy to solve the density-momentum-pressure block \cref{rho-u-p-block}. We develop a geometric multigrid scheme for the item in pink in \cref{sec:mg}.}\label{fig:flowauglag}
\end{figure}

\subsection{A specialized multigrid scheme for $\tA_{\gamma_d} - \tD \tC_\mu^{-1}\tD^\top$}
\label{sec:mg}

As already mentioned, the tradeoff for using an augmented Lagrangian term to control the Schur complement $\tS_{2,\gamma_d}$ is that $\tA_{\gamma_d} - \tD \tC_\mu^{-1}\tD^\top$ becomes difficult to solve, due to the semi-definite term with a large coefficient $\gamma_d \gg 0$. In the past two decades, there has been progress on specialized multigrid schemes, based on the pioneering work of Sch\"oberl \cite{Schoberl1999}, to handle the effects of the augmented Lagrangian term in $\tA_{\gamma_d}$. Strategies based on Sch\"oberl's work have been shown to be extremely effective in parameter-robust preconditioning of the three-dimensional incompressible Navier--Stokes equations \cite{Farrell2021, Farrell2019a}, Oseen--Frank models of cholesteric liquid crystals \cite{Xia2020}, implicitly-constituted non-Newtonian incompressible flow \cite{Gazca-Orozco2020, Farrell2020d}, and magnetohydrodynamics \cite{Laakmann2021}. Related multigrid schemes have also been analyzed in the context of the $\Hdiv$ and $\Hcurl$ Riesz maps \cite{Arnold2000} and, more relevant to our problem, an $\Hdiv$-conforming discretization of the Stokes equations \cite{Hong2016}. 

In this work, we construct a mesh hierarchy and construct our multilevel hierarchy geometrically. The coarse-level operators are induced from \cref{dfoc1}--\cref{dfoc4} via rediscretization. Moreover, in all our examples, we choose a direct solver for the coarse-level solver. Sch\"oberl's analysis gives sufficient conditions on the relaxation method and transfer operators to achieve robustness in the context of multigrid cycles applied to symmetric positive-definite problems augmented with a parameter-dependent positive semi-definite term. The first is that the relaxation method must stably capture the kernel of the semi-definite term. The second requirement is that the prolongation operator must have a continuity constant that is independent of $\gamma_d$. As noted by Hong et al.~\cite[Sec.~1]{Hong2016}, in a nested mesh hierarchy, an exactly divergence-free function on the coarse-grid will be divergence-free on the fine-grid. Therefore, in our context, the natural prolongation operator suffices thanks to our choice of discretization, and we only discuss the relaxation method in this work. The kernel of the semi-definite term involving $\gamma_d$ is
\begin{align}
\mathcal{N}_h = \{ \vect{w}_h \in \vect{X}_{\mathrm{BDM}_k} : (\divv{\vect{w}_h}, \divv{\vect{v}_h})_{L^2(\Omega)} = 0 \; \text{for all} \; \vect{v}_h \in \vect{X}_{\mathrm{BDM}_k}\}, 
\end{align}
i.e.~all functions with divergence zero. For large $\gamma_d$, $\tA_{\gamma_d}$ becomes increasingly singular. Common relaxation methods like Jacobi and Gauss-Seidel do not offer $\gamma_d$-robust smoothing and yield ineffective multigrid cycles. To understand the degradation of Jacobi and Gauss--Seidel as $\gamma_d \to \infty$, it is fruitful to view the relaxation method as a subspace correction method \cite{Xu1992, Xu2001}. Consider the space decomposition
\begin{align}
\vect{X}_{\mathrm{BDM}_k} = \sum_i \vect{X}_i, \label{spacedecomposition}
\end{align}
where the sum is not necessarily direct. A subspace correction method solves for an approximation of the error in each subspace, and combines them (additively or multiplicatively) to form an updated guess for the solution. In the classical Jacobi and Gauss--Seidel iterations, the decomposition, \cref{spacedecomposition}, is given by $\{\vect{X}_i \} = \{\vect{\phi}_i \}$ where $\vect{\phi}_i$, $i=1,\dots,n_{\vect{u}}$, are the velocity basis functions. The difference between Jacobi and Gauss-Seidel is whether the updates are applied additively (Jacobi) or multiplicatively (Gauss--Seidel). 

A sufficient condition for the subspace correction method induced by the decomposition \cref{spacedecomposition} to be robust in $\gamma_d$ for a symmetric positive-definite matrix, is that the decomposition captures the kernel $\mathcal{N}_h$ in the following sense \cite{Benzi2006, Schoberl1999, Farrell2019a, Hong2016}:
\begin{align}
\mathcal{N}_h = \sum_i \vect{X}_i \cap \mathcal{N}_h. \label{kerneldecomposition}
\end{align}
In other words, the decomposition must be sufficiently rich so that any divergence-free velocity can be written as a combination of divergence-free functions from the subspaces $\vect{X}_i$\footnote{In addition, the decomposition must be stable, but we do not elaborate here.}. Jacobi fails this criterion, as each $\vect{\phi}_i$ is not divergence-free. A decomposition satisfying \cref{kerneldecomposition} for the BDM-DG discretization was developed by Hong et al.~\cite[Sec.~4.5]{Hong2016}, where the decomposition is the so-called \emph{star} patch around every vertex of the mesh. This decomposition is visualized in \cref{fig:starpatch} for a $\mathrm{BDM}_1$ discretization in two dimensions. The same topological decomposition extends to higher orders and three dimensions. Since $\tA_{\gamma_d} - \tD \tC_\mu^{-1} \tD^\top$ is not guaranteed to be positive-definite, the theory does not guarantee robust convergence. Nevertheless, we find that a small number of FGMRES iterations preconditioned with the vertex-star patch iteration is very effective as a smoother, as reported in \cite{Farrell2019a} and subsequent works.
\begin{figure}[ht]
\centering
\includegraphics[width =0.3 \textwidth]{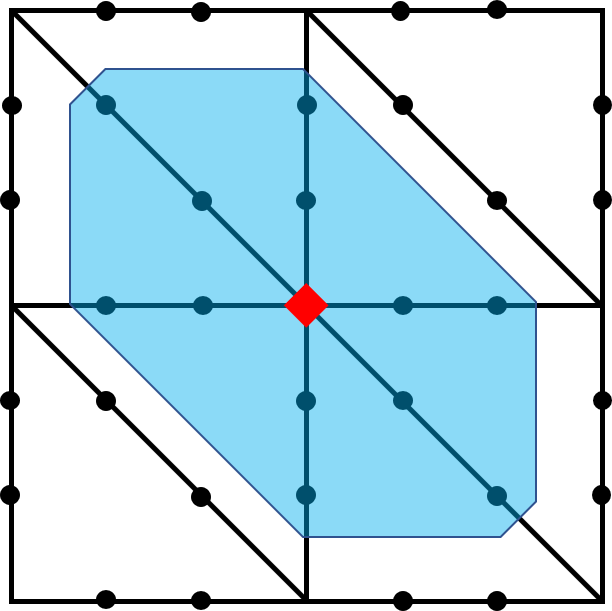}
\caption{The patch of degrees of freedom (black dots inside the blue patch) around a vertex (red diamond) used in the multigrid relaxation for a $\mathrm{BDM}_1$ discretization in two dimensions. Each vertex-star patch contains 12 degrees of freedom in two dimensions with this lowest-order element pair.}\label{fig:starpatch}
\end{figure}

\subsubsection*{Injecting the active set} 
A complication arises in the representation of $\tA_{\gamma_d} - \tD \tC_\mu^{-1} \tD^\top$ on the coarser levels. By first ignoring the BM active set, we note that $\tA_{\gamma_d} - \tD \tC_\mu^{-1} \tD^\top$ can be assembled by injecting the current finite element iterates, $\vect{u}_h$ and $\rho_h$, to the relevant level on the mesh hierarchy, assembling the submatrices $\tA$, $\tD$, $\tD^\top$ and $\tC_\mu$, applying the Dirichlet boundary conditions of the injected velocity to the relevant rows and columns of $\tA$, $\tD$ and $\tD^\top$, and subtracting the triple matrix product $\tD \tC_\mu^{-1} \tD^\top$ from $\tA$. The triple matrix product is sparse and cheap to compute as $\tC_\mu$ is diagonal on all levels. However, we found that an accurate representation of the active set on the coarser levels is essential for the convergence of the multigrid scheme. This is similar to other experiences reported in the literature \cite{Engel2011,Hoppe1987}. Hence, the difficulty lies in defining the active set on the coarser levels. An obvious choice is to use the definitions of $\mathcal{A}$ and $\mathcal{I}$ in \cref{alg:bm} defined via the injected material distribution iterate. However, in numerical experiments we found that this choice resulted in poor iteration counts.

Consider a two-grid method with the fine-level triangulation $\mathcal{T}_h$, $h=H/2$, obtained by a uniform refinement of the simplices in coarse-level triangulation $\mathcal{T}_H$. As the material distribution is discretized with $\DG_0$ elements, each degree of freedom $i$ associated with the fine-level material distribution iterate can be associated with an element $K_h \in \mathcal{T}_h$ in the fine level and analogously with the degrees of freedom of the coarse-level material distribution iterate with elements in the coarse level. We say that a fine-level element $K_h \in \mathcal{T}_h$ is in the active set $\mathcal{A}_h$ (written as $K_h \in \mathcal{A}_h$) if the degree of freedom associated with $K_h$ is in the active set $\mathcal{A}_h$. This definition naturally extends to the coarse-level elements and active sets. 

We now utilize an idea inspired by the work of Hoppe \cite{Hoppe1987} and Engel and Griebel \cite{Engel2011} to define the coarse-level active sets. A coarse-level element, $K_{H} \in \mathcal{T}_H$ containing the parent fine-level elements $K_{h,1},\dots, K_{h,s} \in \mathcal{T}_h$ is defined to be in the coarse-level active set $\mathcal{A}_H$ if 
\begin{align}
|\{K_{h,j}\in \mathcal{A}_h : j = 1, \dots, s \}| \geq m,
\label{coarse-active set}
\end{align}
where $m \in [1,s]$ and $s=4$ in two dimensions and $s = 8$ in three dimensions. In other words, the coarse-level element is in the coarse-level active set if it contains $m$ or more fine-level parent elements that are in the fine-level active set. By starting at the finest-level active set that is defined by \cref{alg:bm}, we recursively define all the active sets in mesh hierarchy via \cref{coarse-active set}. Experiments revealed that a good choice for $m$ is $m = s/2$, i.e.~a coarse-level element is active if at least half of its parent fine-level elements are active. A summary of the multigrid strategy is given in \cref{fig:flowmg}. 
\begin{figure}[ht]
\centering
\includegraphics[width =0.7 \textwidth]{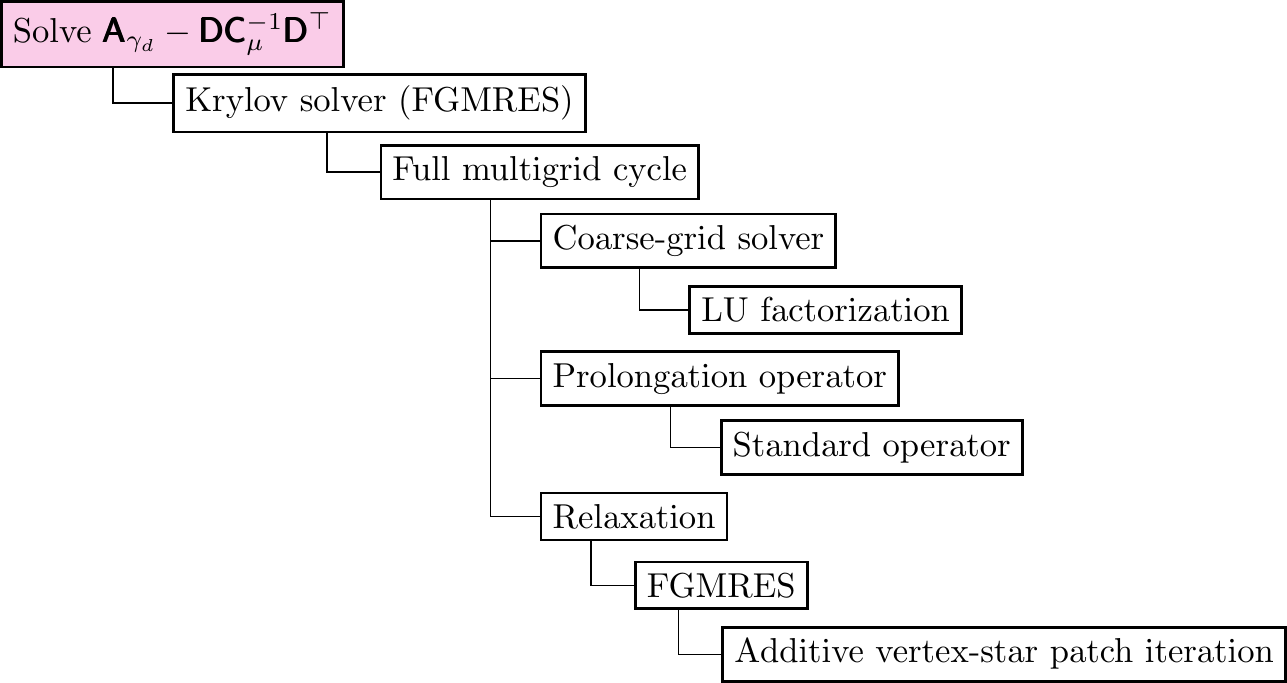}
\caption{The multigrid solver strategy of \cref{sec:mg} to solve $\tA_{\gamma_d} - \tD \tC_\mu^{-1} \tD^\top$.}\label{fig:flowmg}
\end{figure}

\begin{remark}
The choice of \cref{coarse-active set} is more generous than utilizing the definition of the fine-level active set directly with the injected material distribution iterate. In particular, some coarse cells that are ``borderline'' between the active set and inactive set are placed in the active set by  \cref{coarse-active set} but in the inactive set when defining the active set using the injected material distribution. Numerically, we see that the iteration counts suffer if the criteria for a coarse cell to be in the coarse-level active set are too strict. 
\end{remark}

\section{Numerical results}
\label{sec:examples}
All examples in this chapter were implemented with the finite element software Firedrake \cite{firedrake}. Block preconditioning and Krylov subspace methods were implemented using Firedrake \cite{firedrake} and PETSc \cite{petsc}, and sparse LU factorizations were performed with MUMPS \cite{mumps}. Vertex-star patch relaxation is implemented via the PCPATCH functionality  \cite{pcpatch} recently introduced to PETSc. The meshes were created in Firedrake or Gmsh \cite{Geuzaine2009}. The uniqueness of the pressure was enforced by orthogonalizing against the nullspace of constants in the Krylov method. The coarsest-level correction in the multigrid scheme of \cref{sec:mg} is computed via an LU factorization. The BM updates are scaled with a (damped) $l^2$-minimizing linesearch \cite[Alg.~2]{Brune2015} and we do not use a prediction step in any examples. Wherever (F)GMRES is used, it is not restarted. 

\subsection{Double-pipe}
\label{sec:double-pipe}
The first example is the two-dimensional double-pipe problem first introduced by Borrvall and Petersson \cite[Sec.~4.5]{Borrvall2003}. The double-pipe problem is posed on a rectangular domain $\Omega = (0,3/2) \times (0,1)$ with two inlets and two outlets fixed by the following Dirichlet boundary condition
\begin{align}
\label{eq:doublepipebcs}
 \vect{g}(x,y) = 
\begin{cases}
\left(1-144(y-3/4)^2, 0\right)^\top & \text{if} \;\; 2/3 \leq y \leq 5/6, x = 0 \; \text{or} \; 3/2,\;\; \\
\left(1-144(y-1/4)^2, 0\right)^\top & \text{if} \;\; 1/6 \leq y \leq 1/3, x = 0 \; \text{or} \; 3/2,\;\; \\
(0,0)^\top & \text{elsewhere on} \;\partial \Omega.
\end{cases}
\end{align}
We choose a volume fraction of $\gamma = 1/3$ and the inverse permeability $\alpha$ is given in \cref{eq:alphachoice}, with $\bar \alpha = 2.5 \times 10^4$ and $q=1/10$. The problem supports two minima: a local minimum of two straight channels from each inlet to its opposite outlet, and a global minimum in the shape of a double-ended wrench. These are depicted in \cref{fig:double-pipe}.

\begin{figure}[ht]
\centering
\includegraphics[width = 0.3\textwidth]{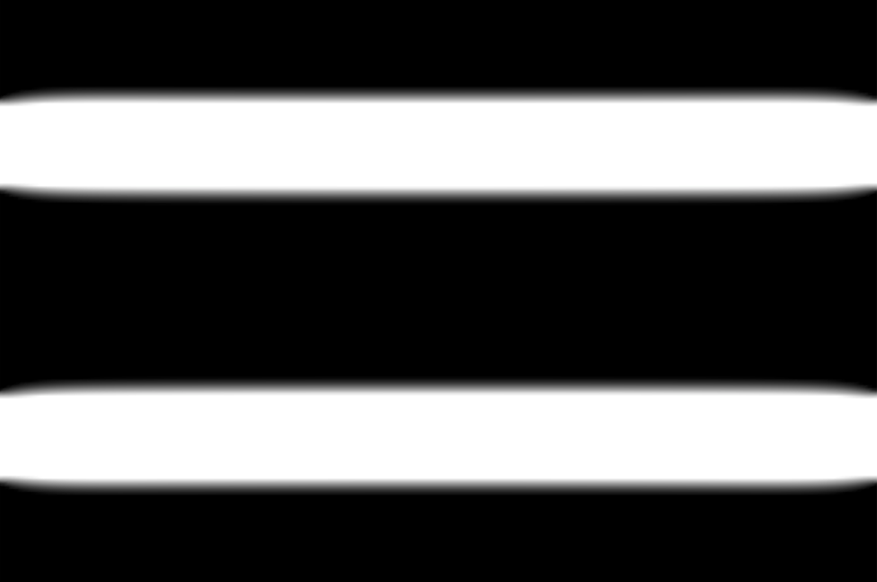}
\includegraphics[width = 0.3\textwidth]{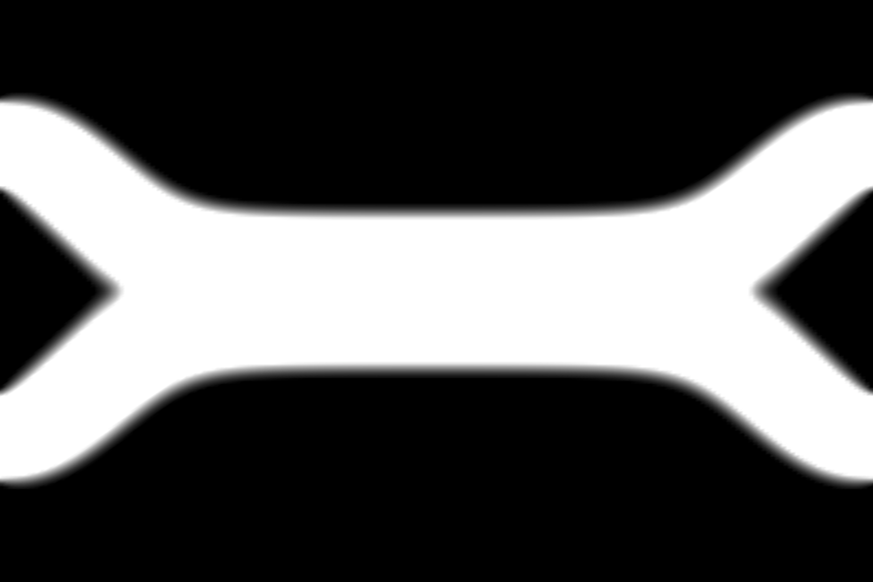}
\caption{The material distribution of the straight-channel (left) and double-ended wrench (right) solutions of the double-pipe optimization problem.} 
\label{fig:double-pipe}
\end{figure}

The two strategies we utilize for solving the linear systems are the following:
\begin{enumerate}[label=({aL}\arabic*)]
\item The nested block preconditioning approach of \cref{sec:explore} on \cref{eq:DiscretizedNewton} with $\gamma_d = 10^4$, and an LU factorization for the augmented momentum block $\tA_{\gamma_d} - \tD \tC_\mu^{-1} \tD^\top$; \label{dp:s2}
\item The nested block preconditioning approach of \cref{sec:explore} on \cref{eq:DiscretizedNewton} with $\gamma_d = 10^4$, and the geometric multigrid method of \cref{sec:mg} to approximate the action of the inverse of $\tA_{\gamma_d} - \tD \tC_\mu^{-1} \tD^\top$. We fix the relaxation to 5 FGMRES iterations preconditioned with a vertex-star patch iteration and a full multigrid cycle is used. \label{dp:s3}
\end{enumerate}

We opt for a first-order Brezzi--Douglas--Marini $\mathrm{BDM}_1 \times \mathrm{DG}_0$ mixed finite element discretization for the velocity-pressure pair, with interior penalty parameter $\sigma=10$, and a $\mathrm{DG}_0$ discretization for the material distribution. This choice of discretization makes both $\tM_p$ and $\tC_\mu$ diagonal. For all mesh sizes, we initialize the deflated barrier method at $\mu_0 = 105$ and perform deflation immediately to find the second branch. The first and second branches converge to the straight channels and double-ended wrench solutions, respectively, as $\mu \to 0$. The nonlinear solves are terminated with an absolute tolerance of $10^{-5}$. The outer FGMRES solver's absolute and relative tolerances are both set to $10^{-7}$. In Tables \labelcref{tab:double-pipe-its-aL1} and \labelcref{tab:double-pipe-its-aL2}, we list the iteration counts for the strategies \labelcref{dp:s2} and \labelcref{dp:s3} on meshes with decreasing mesh sizes. In the \labelcref{dp:s3} strategy, the augmented block solve is approximated to an absolute tolerance of $10^{-8}$ or a relative tolerance of $10^{-9}$. 
\begin{table}[ht]
\small
\centering
\begin{tabular}{ll|ll}
\toprule
&  & \multicolumn{2}{c}{\labelcref{dp:s2}} \\ \midrule
$h$ & Dofs   &  BM & OK \\ \midrule
0.0361&25,201   &	274 	& 461  (1.68)\\
0.0180& 100,401 &  626  &1180 (1.88)\\
0.0090& 400,801  & 733 & 1283 (1.75)\\
0.0045& 1,601,601  & 809	 & 1607 (1.99)\\
\bottomrule
\end{tabular}
\caption{The total cumulative number of iterations to compute both minimizers of the double-pipe problem over all the subproblems with the \labelcref{dp:s2} preconditioner. BM stands for the number of Benson--Munson iterations and OK stands for the number of outer Krylov FGMRES iterations. The numbers in brackets in the OK  column are the number of average Krylov iterations per BM iteration.} 
\label{tab:double-pipe-its-aL1}
\end{table}

\begin{table}[ht]
\small
\centering
\begin{tabular}{ll|lll}
\toprule
&  &\multicolumn{3}{c}{\labelcref{dp:s3} 2-grid}  \\ \midrule
$h$ & Dofs   & BM & OK & IK \\ \midrule
0.0180& 100,401 & 627 &1270 (2.03)&100,109 (13.14)\\
0.0090& 400,801  & 735 & 1360 (1.85) & 86,745 (10.63)\\
\bottomrule
\end{tabular}
\caption{The total cumulative number of iterations to compute both minimizers of the double-pipe problem over all the subproblems with the \labelcref{dp:s3} preconditioner. BM stands for the number of Benson--Munson iterations, OK stands for the number of outer Krylov FGMRES iterations, and IK is the number of inner Krylov FGMRES iterations preconditioned with the geometric multigrid method of \cref{sec:mg} to solve linear systems involving $\tA_{\gamma_d} - \tD \tC_\mu^{-1} \tD^\top$. The numbers in brackets in the OK and IK columns are the number of average Krylov iterations per BM iteration and per $\tA_{\gamma_d} - \tD \tC_\mu^{-1} \tD^\top$ solve, respectively.} 
\label{tab:double-pipe-its-aL2}
\end{table}

We see that the Krylov iterations per BM iteration are robust to the mesh size for both preconditioning strategies. The preconditioning strategy with an LU factorization for the augmented momentum block \labelcref{dp:s2} averages to under 2 preconditioned FGMRES iterations per BM iteration. Similarly with \labelcref{dp:s3}, where the augmented block solve is approximated with FGMRES preconditioned with a 2-grid multigrid cycle, the outer FGMRES iterations remain under 2  preconditioned FGMRES iterations per BM iteration on average. Moreover, the average inner FGMRES based on the kernel-preserving multigrid scheme iterations remain under 13 iterations per augmented momentum block solve over all mesh sizes. In particular, the average inner FGMRES iterations decreases on the fine mesh relative to the coarse mesh. We note that, unlike the conforming discretizations in \cite{Papadopoulos2021a}, the number of BM iterations slowly increases with decreasing mesh size. This may be due to the discretization or could be related to the fact that the linear systems are not being solved exactly. We do not report the timings of the solves in this example. In two dimensions a direct solve, using MUMPS with $\gamma_d = 0$, of the full BM system \cref{eq:DiscretizedNewton} is faster than the block preconditioning and multigrid strategy proposed here. However, we note that in three dimensions, a direct solve often fails due to the ill-conditioning of the system. Moreover, assembling fine-grid matrices has large memory requirements. As the \labelcref{dp:s3} strategy only requires an assembly of the coarsest-grid augmented block, this allows for finer mesh linear solves. This is further discussed in \cref{sec:3d-5-holes}.

A natural question is the sensitivity of the solver with respect to the augmented Lagrangian parameter $\gamma_d$. We wish to explore the following:
\begin{enumerate}[label=({H}\arabic*)]
\itemsep=0pt
\item The effect of roundoff error as $\gamma_d$ increases; \label{hyp:1}
\item The improvement of the approximation of $-\gamma_d \tM_p^{-1}$ for $\tS_{2,\gamma_d}^{-1}$; \label{hyp:2}
\item The robustness of the vertex-star patch multigrid cycle for increasing $\gamma_d$. \label{hyp:3}
\end{enumerate}
An unfortunate side-effect of increasing $\gamma_d$ is the expectation of high roundoff error. This would manifest in the nonlinear solver stagnating at higher residual norm values than desired. However, as $\gamma_d$ increases, we expect that the Schur complement approximation by the pressure mass matrix improves as shown in \cref{prop:Schur-Mp}. Hence, the number of outer Krylov FGMRES iterations per BM iteration should decrease as $\gamma_d \to \infty$. Finally, the multigrid cycle should be robust to the value of $\gamma_d$. Hence, we expect that the number of inner Krylov FGMRES iterations preconditioned by the multigrid cycle to approximate the inverse of the augmented momentum block should stay roughly constant with increasing $\gamma_d$. In \cref{tab:double-pipe-its-gammad} we test these hypotheses. 

We investigate \labelcref{hyp:1} and \labelcref{hyp:2} via the \labelcref{dp:s2} solver (with varying $\gamma_d$). We fix a mesh with mesh size $h=0.0361$ and choose the discretization used previously. We consider the deflated barrier method subproblem at $\mu=105$ and $\mu = 1$ for the first branch. The nonlinear solves are terminated when the decrease in the residual norm stagnates. The outer FGMRES solver's absolute and relative tolerances are set to $10^{-15}$ and $10^{-5}$, respectively. The middle four columns confirm our first two hypotheses. At $\gamma_d = 10^7$ we achieve the optimal number of outer FGMRES iterations per BM iteration. The approximation of the Schur complement with the pressure mass matrix in near-perfect. However, we note that for each increase in the order of magnitude after $\gamma_d = 10^2$, the roundoff error causes the nonlinear solver to stagnate at an order of magnitude higher. We believe a good compromising choice is $\gamma_d = 10^4$.

To investigate \labelcref{hyp:3} we utilize the \labelcref{dp:s3} solver (with varying $\gamma_d$). We fix a mesh with mesh size $h=0.0361$ for the coarse level and uniformly refine the mesh once for the fine level. We consider the first outer Krylov FGMRES iteration at $\mu=105$ and tabulate the average number of multigrid preconditioned inner Krylov FGMRES iterations per augmented momentum block solve required. The augmented momentum block solve is approximated to an absolute tolerance of $10^{-8}$ or a relative tolerance of $10^{-9}$. We see that with increasing $\gamma_d$ the iterations stay roughly constant. This confirms the cycles are robust in $\gamma_d$. 
\begin{table}[ht]
\small
\centering
\begin{tabular}{l|ll|ll|l}
\toprule
& \multicolumn{4}{c}{\labelcref{dp:s2}}  & \labelcref{dp:s3}  \\ \midrule
& \multicolumn{2}{c}{$\mu=105$}  & \multicolumn{2}{c}{$\mu=1$}  & $\mu=105$\\ \midrule
$\gamma_d$ & avg.~OK  & res.~norm & avg.~OK  & res.~norm & avg.~IK   \\ \midrule
$10^{-5}$ &  157.2 &	$9.90\times10^{-12}$	& 66.0  & $6.40\times10^{-12}$ & 10.0   \\
$10^{-1}$ &133.5   &$9.93\times10^{-12}$& 93.9  & $6.73\times10^{-12}$& 11.3  \\
$10^{0}$ & 77.8 &$1.09\times10^{-11}$& 40.6  & $6.92\times10^{-12}$& 12.8 \\
$10^{1}$ & 30.0 &$1.15\times10^{-11}$& 16.3  & $8.99\times10^{-12}$& 10.0  \\
$10^{2}$ &8.23  &$3.92\times10^{-11}$	&  4.44 & $4.73\times10^{-11}$& 9.50 \\
$10^{3}$ &3.31  &$3.75\times10^{-10}$	& 3.12   & $4.67\times10^{-10}$& 9.17  \\
$10^{4}$ &1.85   &$3.71\times10^{-9}$&  2.20 & $4.57\times10^{-9}$&  9.50 \\
$10^5$ &  1.33 &	$3.70\times10^{-8}$	& 1.67  &$4.68\times10^{-8}$&  9.50  \\  
$10^6$ &  1.25 &	$3.77\times10^{-7}$	& 1.27  &$4.57\times10^{-7}$ &  9.50   \\  
$10^7$ &  1.14 &	$3.73\times10^{-6}$	&1.20 & $4.59\times10^{-6}$ & 9.50     \\ 
$10^8$ &  1.18  &	$3.63\times10^{-5}$	& 1.50   & $4.92\times10^{-5}$& 9.50 \\
\bottomrule
\end{tabular}
\caption{The sensitivity of the \labelcref{dp:s2} and \labelcref{dp:s3} solver strategies to the value of $\gamma_d$. For the \labelcref{dp:s2} strategy, the columns labelled avg.~OK are the average number of outer Krylov FGMRES iterations per BM iteration to solve the deflated barrier method subproblem at $\mu=105$ and $\mu=1$ for $h=0.0361$. The columns labelled res.~norm hold the values of the smallest residual norm before the nonlinear solver stagnates due to roundoff errors. For the \labelcref{dp:s3} strategy the coarse level has a mesh size of $h=0.0361$ and we perform one uniform refinement. The column labelled avg.~IK is the average number of multigrid preconditioned inner Krylov FGMRES iterations per augmented momentum block solve as seen in the first outer Krylov FGMRES iteration at subproblem $\mu=105$.} 
\label{tab:double-pipe-its-gammad}
\end{table}

\subsection{3D cross-channel} 
The first three-dimensional example we consider is the cross-channel problem as found in S\'a et al.~\cite[Sec.~7.5]{Sa2016}. The domain is the unit cube, $\Omega = (0,1)^3$, with two circular inlets and two circular outlets that are arranged in a cross pattern as visualized in \cref{fig:cross-channel-setup}. The volume fraction is given by $\gamma = 1/10$ and we use \cref{eq:alphachoice} as our choice of $\alpha$, with $\bar \alpha = 2.5 \times 10^4$ and $q=1/10$. The Dirichlet boundary condition on $\vect{u}$ is
\begin{align}
\vect{g}(x,y,z) = \left(1-12\pi((y-a)^2+(z-b)^2), 0, 0\right)^\top,
\end{align}
if $12\pi((y-a)^2+(z-b)^2) \leq 1$ and $x = 0$ with $a = 1/2$, $b \in \{1/4, 3/4 \}$ or $x = 1$ with $a \in \{1/4, 3/4 \}$, $b= 1/2$, and $\vect{g}(x,y,z) = (0,0,0)^\top$ elsewhere on $\partial \Omega$.

\begin{figure}[ht]
\centering
\includegraphics[width = 0.35\textwidth]{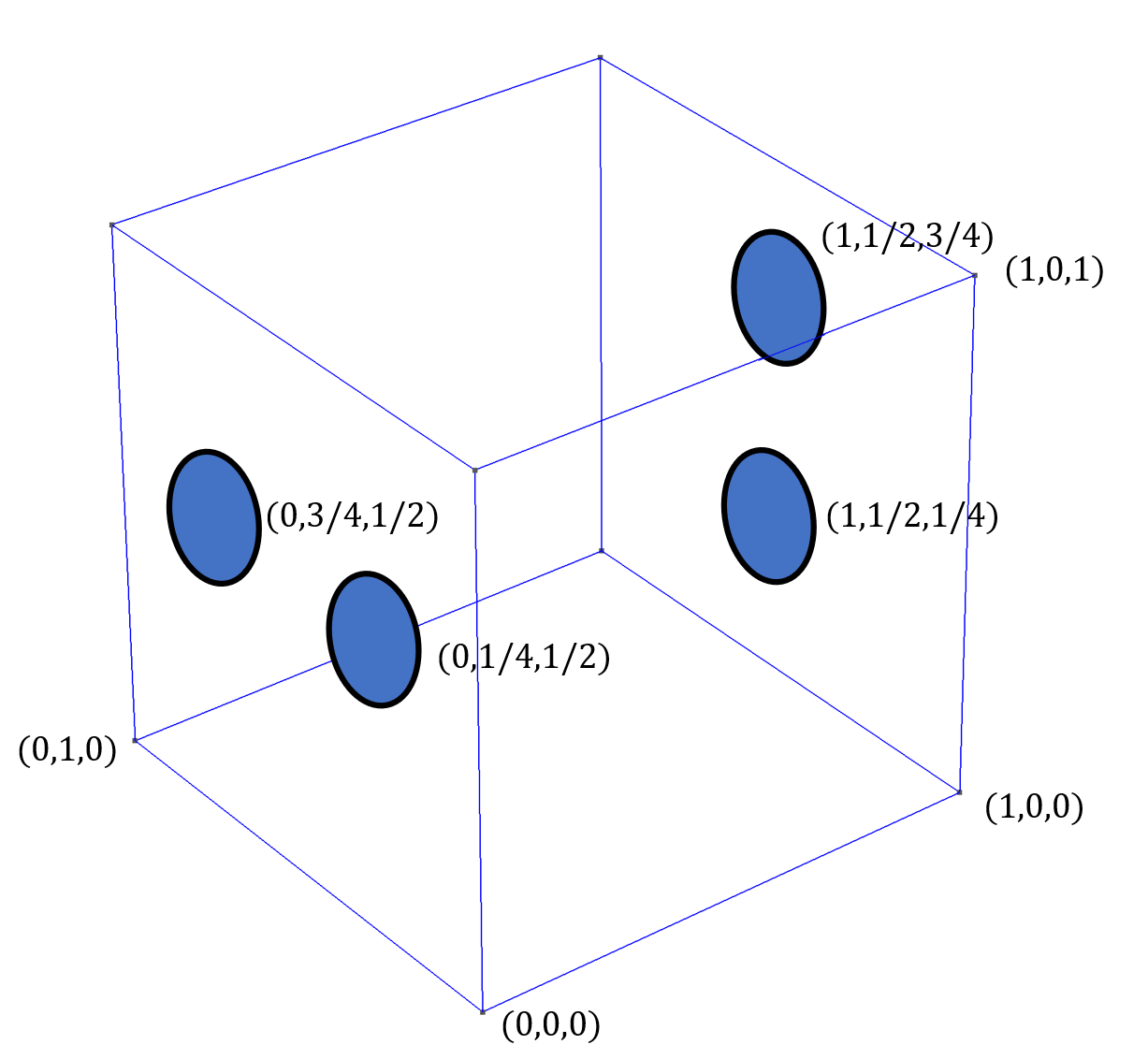}
\caption{Setup of the 3D cross-channel problem. This problem features a unit cube domain with two inlets and two outlets arranged in a cross pattern.}\label{fig:cross-channel-setup}
\end{figure}

We apply the same first-order BDM discretization, with interior penalty penalization parameter $\sigma=10$, and run the deflated barrier method twice. The first pass is on a $20 \times 20 \times 20$ mesh resulting in 391,201 degrees of freedom. The augmented Lagrangian parameter is chosen to be $\gamma_d = 10^6$. Due to the nested block preconditioning, the action of the inverse of the augmented momentum block $\tA_{\gamma_d} - \tD \tC_\mu^{-1}\tD^\top$ must be applied six times per outer FGMRES iteration. On this relatively coarse mesh, it is cheaper to factorize $\tA_{\gamma_d} - \tD \tC_\mu^{-1} \tD^\top$ with MUMPS at the start of each BM iteration and reuse the factorization, rather than iteratively solve $\tA_{\gamma_d} - \tD  \tC_\mu^{-1}\tD^\top$ with multigrid each time an inverse action is required, i.e.~we use the augmented Lagrangian preconditioner \labelcref{dp:s2} for the linear systems. We initialize the barrier parameter at $\mu_0 = 100$. The nonlinear solves are terminated with an absolute tolerance of $10^{-5}$ and the outer FGMRES solver's absolute tolerance is set to $5 \times 10^{-7}$ (all linear solves converged to the absolute tolerance).  A second branch of solutions is found at $\mu = 38.74$ and a third branch at $\mu = 34.87$. 

These coarse-mesh solutions at $\mu = 0$ are interpolated onto a finer $40 \times 40 \times 40$ mesh resulting in 3,100,801 degrees of freedom. The deflated barrier method is then reinitialized at $\mu_0 = 10^{-6}$; we found that the BM solver often diverges if initialized at $\mu_0=0$. On this finer mesh, we again apply the augmented Lagrangian preconditioner ($\gamma_d = 10^5$). The nonlinear solves are terminated with an absolute tolerance of $10^{-4}$ if $\mu > 0$ and $10^{-6}$ if $\mu = 0$. Moreover, the outer FGMRES solver's absolute tolerance is set to $10^{-7}$ if $\mu > 0$ and $10^{-9}$ if $\mu =0$ and its relative tolerance is set to $10^{-7}$. Now a direct solve of the augmented momentum block is prohibitive and we switch to the \labelcref{dp:s3} strategy where each approximate inverse of the augmented momentum block is solved to an absolute or relative tolerance of $10^{-8}$ or $10^{-9}$, respectively, with the (2-grid) multigrid scheme of \cref{sec:mg}. For the relaxation on the fine level, we use 5 FGMRES iterations preconditioned with the vertex-star patch relaxation. 

The resulting three solutions are shown in \cref{fig:cross-channel}. Two of these are symmetric straight channel solutions where the inlets swap which outlet they exit from. Their symmetry results in similar costs. A third global minimizer comes in the form of a merged channel solution; the two channels briefly merge in the middle of the box domain before splitting to exit via the two outlets. 

The iteration counts for the initial search on a $20 \times 20 \times 20$ mesh and the refinement on a  $40 \times 40 \times 40$ mesh are given in \cref{tab:cross-channel}. We see that our preconditioner is effective in both cases. When using a direct solve of the augmented momentum block, we average slightly more than one outer FGMRES iterations per BM iteration. Similarly, the tolerances for the augmented momentum block solve on the fine mesh are strict enough so that the outer FGMRES iterations average between 2.24 and 2.55. Moreover, each augmented momentum block solve requires an average in the range of 15.77--16.77 multigrid preconditioned FGMRES iterations to reach the prescribed tolerances. 
\begin{figure}[ht]
\centering
\subfloat[Branch 0]{\includegraphics[width = 0.32\textwidth]{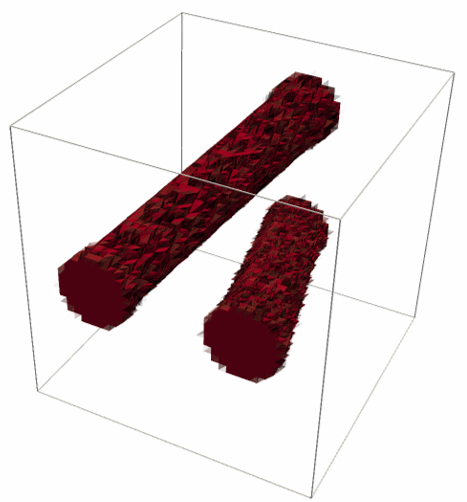}}
\subfloat[Branch 1]{\includegraphics[width = 0.32\textwidth]{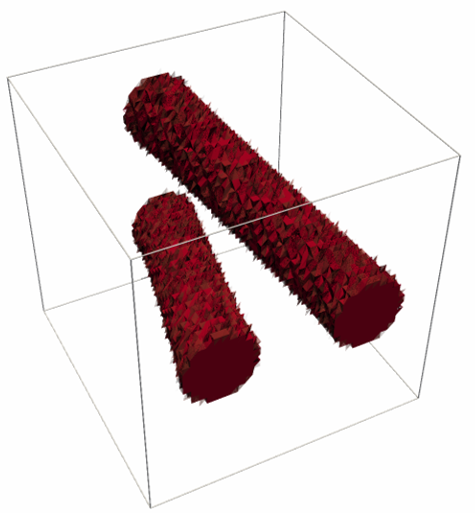}}
\subfloat[Branch 2]{\includegraphics[width = 0.32\textwidth]{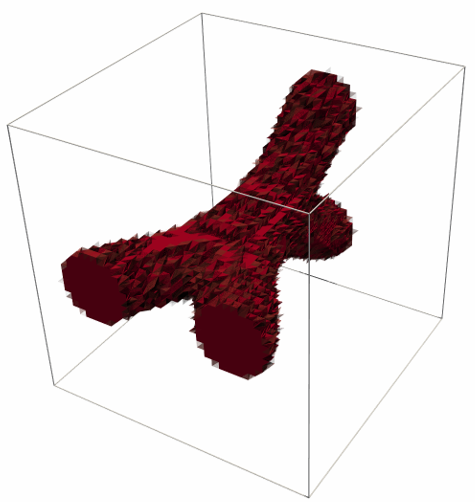}}
\caption{The material distribution of the solutions discovered by the deflated barrier method to the 3D cross-channel optimization problem discretized with 3,100,801 degrees of freedom. The power dissipation values are $J_h = 14.51, 14.62$, and $13.08$ for branches 0, 1, and 2, respectively.}
\label{fig:cross-channel}
\end{figure}

\begin{table}[ht]
\centering
\begin{tabular}{l|ll|lll}
\toprule
& \multicolumn{2}{c|}{Coarse mesh, $\gamma_d = 10^6$} & \multicolumn{3}{c}{Fine mesh, $\gamma_d = 10^5$} \\
\midrule
Branch & BM & OK & BM & OK  & IK \\ \midrule
0& 	828 & 829 (1.00) & 49& 110  (2.24) &	10,405 (15.77) \\
1& 444 & 445 (1.00) & 52 &  121 (2.33)   & 12,176	(16.77)    \\
2&  409& 422 (1.03) & 53 & 135 (2.55)  &13,085 (16.15)\\  
\bottomrule
\end{tabular}
\caption{Cumulative number of BM iterations, outer FGMRES iterations (OK), and for the fine mesh, inner FGMRES iterations preconditioned with the multigrid scheme of \cref{sec:mg} (IK) for the 3D cross-channel problem. The bracketed numbers in the OK and IK columns are the average number of outer FGMRES iterations per BM iteration and average number of inner FGMRES iterations per augmented momentum block solve, respectively. The barrier parameter is initialized at $\mu_0 =100$ on the coarse mesh and $\mu_0=10^{-6}$ on the fine mesh.} 
\label{tab:cross-channel}
\end{table}

\subsection{3D five-holes quadruple-pipe}
\label{sec:3d-5-holes}
In \cite[Sec.~4.4]{Papadopoulos2021a}, it was observed that introducing holes in a rectangular domain caused a significant increase in the number of solutions. We now extend this idea to three dimensions and introduce the generalization of the five-holes double-pipe problem. This problem features a box domain  $\Omega = (0,3/2) \times (0,1) \times (0,1)$ with five internal holes in the shape of cubes, of edge length $1/10$, with centres at $(3/4,1/4,1/4)$, $(3/4,1/4,3/4)$, $(3/4,3/4,1/4)$, $(3/4,3/4,3/4)$, and $(3/4,1/2,1/2)$. There are four inlets and four outlets. The circular inlets of radius $1/\sqrt{12 \pi}$ are positioned on the face $x = 0$ with the centres $(y,z) = (1/4,1/4)$, $(1/4,3/4)$, $(3/4,1/4)$, and $(3/4,3/4)$. The circular outlets of the same radius are positioned on the face $x =3/2$ with the same centres. The domain setup is depicted in \cref{fig:3d-5-holes-setup}. We impose a parabolic Dirichlet boundary condition on the inlets and outlets and a zero Dirichlet boundary condition elsewhere on the boundary (including the boundary of the five internal holes), i.e.~the Dirichlet boundary condition is given by:
\begin{align}
\label{eq:quadpipebcs}
\vect{g}(x,y,z) = \left(1-12\pi((y-a)^2+(z-b)^2), 0, 0\right)^\top,
\end{align}
if $12\pi((y-a)^2+(z-b)^2) \leq 1$, where $ a,b \in \{1/4, 3/4 \}, \; x = 0 \; \text{or} \; 3/2$ and
\begin{align}
\label{eq:quadpipebcs2}
\vect{g}(x,y,z) = (0,0,0)^\top,
\end{align}
elsewhere on $\partial\Omega$, including the boundaries of the five internal holes. We choose a volume fraction of $\gamma = 1/5$ and the inverse permeability term $\alpha$ is given by \cref{eq:alphachoice}, with $\bar \alpha = 2.5 \times 10^4$ and $q=1/10$.
\begin{figure}[ht]
\centering
\includegraphics[width = 0.4\textwidth]{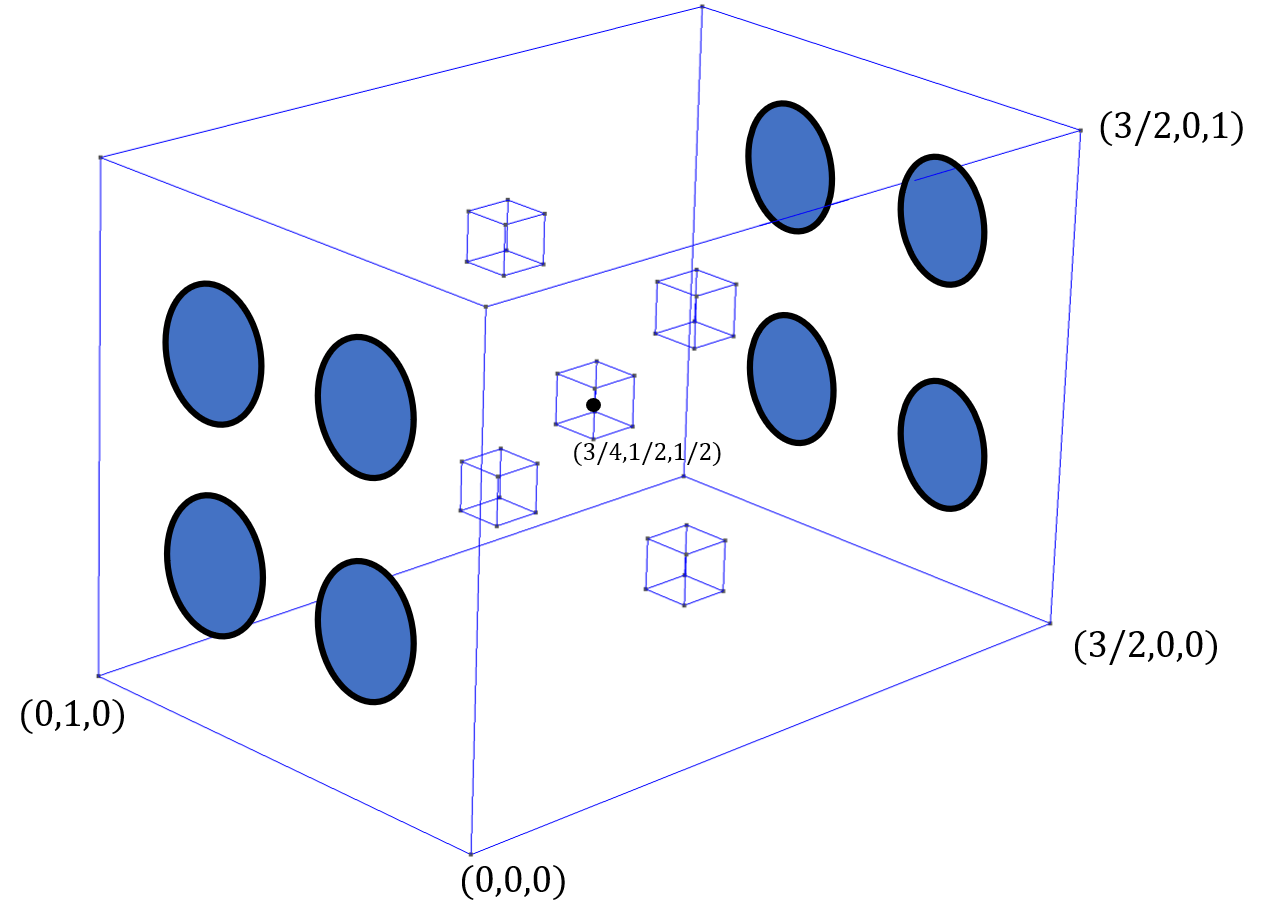}
\caption{Setup of the 3D five-holes quadruple-pipe problem. This problem features 4 inlets and 4 outlets. The domain is a box, $\Omega = (0,3/2) \times (0,1) \times (0,1)$, with five internal holes in the shape of cubes with edge length $1/10$ that are centred at $(3/4,1/4,1/4)$, $(3/4,1/4,3/4)$, $(3/4,3/4,1/4)$, $(3/4,3/4,3/4)$, and $(3/4,1/2,1/2)$.}\label{fig:3d-5-holes-setup}
\end{figure}

The domain is nonconvex and has a complicated geometry due to the cuboidal internal holes, which we do not mesh. We build the coarse level mesh with Gmsh; the finer levels are built by uniform refinement of the coarse mesh. We choose a first-order BDM discretization for the velocity-pressure pair, with interior penalty penalization parameter $\sigma=10^3$, and run the deflated barrier method twice. The larger choice for $\sigma$ is required to sufficiently enforce the boundary conditions in the tangential directions. The first run is on a (relatively) coarse mesh with 30,848 elements which results in 256,745 degrees of freedom. The barrier parameter is initialized at $\mu_0= 200$ and we use the augmented Lagrangian preconditioner \labelcref{dp:s2} for the linear systems, with an augmented Lagrangian parameter value of $\gamma_d = 10^5$. The nonlinear solves are terminated with an absolute tolerance of $10^{-4}$. The outer FGMRES solver's absolute tolerance is set to $5 \times 10^{-7}$ (all linear solves converged to the absolute tolerance).

In total we find 14 coarse-grid solutions. Branches 1 and 2 are found at $\mu=53.81$. Branches 3, 4, 5, and 6 are found at $\mu=11.39$. Branch 7 is found at $\mu=10.25$. Branch 8 is found at $\mu=9.23$. Branch 9 is found at $\mu=6.73$, branch 10 at $\mu = 6.73$, branches 11 and 12 are found at $\mu  = 6.05$, and, finally, branch 13 is found at $\mu = 4.41$. 

We interpolate the coarse-level solutions onto the first refinement, which results in 2,014,113 degrees of freedom. The deflated barrier method is initialized at $\mu_0 =10^{-5}$, using the coarse-level solutions as initial guesses. We damp the $l^2$-minimizing linesearch in the nonlinear solver by a factor of 1/2. The nonlinear solves are terminated with an absolute tolerance of $10^{-4}$ if $\mu > 0$ and $10^{-6}$ if $\mu = 0$. The outer FGMRES solver's absolute tolerance is set to $10^{-6}$ if $\mu > 0$ and $10^{-7}$ if $\mu = 0$, and its relative tolerance is set to $10^{-5}$. We apply an augmented Lagrangian 2-grid multigrid preconditioner \labelcref{dp:s3} to the linear systems with 5 FGMRES iterations for the relaxation of the fine level and $\gamma_d = 10^5$. The augmented momentum block is solved to an absolute or relative tolerance of $10^{-4}$ or $10^{-7}$, respectively. Of the original 14 solutions, the nonlinear solver successfully converges to 7 fine-grid solutions. 

The resulting iteration counts are given in \cref{tab:five-holes-quad-pipe}, the resulting fine mesh solutions are shown in \cref{fig:3d-5-holes-1}, and their cross sections are shown in \cref{fig:3d-5-holes-slice}. A further mesh refinement of branch 0 is shown in \cref{fig:3d-5-holes-2} as computed using the augmented Lagrangian \labelcref{dp:s3} with a 3-grid multigrid cycle. As expected, the five holes obstruct the channels and prevent a large channel passing through the centre. The best solutions found are branches 0, 1, and 11 where the channels form one large channel and either move to the left, upwards or downwards to avoid the middle internal hole. As in the five-holes double-pipe example, there are remaining solutions that we have not yet computed, by symmetry.

In \cref{tab:five-holes-quad-pipe-timings} we give timings for the solvers as run on a machine with 512 GB of RAM and 32 CPUs Intel(R) Xeon(R) CPU E5-4627 v2 @ 3.30GHz. We note that a direct LU factorization of the full BM system \cref{eq:DiscretizedNewton} fails to converge due to the ill-conditioning of the system (even with $\gamma_d = 0$). Thus the preconditioning strategies are necessary to find the BM updates. On the coarse mesh, the \labelcref{dp:s2} strategy is faster than the \labelcref{dp:s3} strategy. Moreover, we show that the average time taken per BM iteration drops from 277 seconds with 1 CPU to 35 seconds when using 32 CPUs. On the fine mesh, the assembly of the full fine-grid augmented block requires more memory than what is available on the workstation (512 GB). Hence only the \labelcref{dp:s3} strategy succeeded in grid-sequencing the solutions on this machine.

\begin{table}[ht]
\small
\centering
\begin{tabular}{l|ll|lll}
\toprule
& \multicolumn{2}{c|}{Coarse mesh} & \multicolumn{3}{c|}{Fine mesh}  \\
\midrule
Branch & BM & OK & BM & OK  & IK \\ \midrule
0& 438& 438  (1) & 41& 109 (2.66) &15,828 (24.2) \\
1& 338 & 338 (1) & 36& 97 (2.69) &13,736 (23.6) \\
2& 396 & 396 (1)   &49 &130 (2.65) &18,999 (24.4) \\
3& 254 & 254 (1)   &46 &114 (2.48)& 16,464 (24.1) \\
7&  221 & 221 (1)   &53&132 (2.49) & 20,061 (25.3) \\
11& 183 & 183 (1)  &64 &180 (2.81) & 27,738 (25.7)\\
13& 187 & 187 (1)  & 60&169 (2.82)& 28,542 (28.1)\\
\bottomrule
\end{tabular}
\caption{Cumulative number of BM iterations, outer FGMRES iterations (OK), and for the fine mesh, inner FGMRES iterations preconditioned with the multigrid scheme of \cref{sec:mg} (IK) for the 3D five-holes quadruple-pipe problem. The bracketed numbers in the OK and IK columns are the average number of outer FGMRES iterations per BM iteration and average number of inner FGMRES iterations per augmented momentum block solve, respectively. The barrier parameter is initialized at $\mu_0 =200$ on the coarse mesh and $\mu_0=10^{-5}$ on the fine mesh.} 
\label{tab:five-holes-quad-pipe}
\end{table}

\begin{table}[ht]
\small
\centering
\begin{tabular}{l|lll}
\toprule
Strategy & BM & Time taken (s)& Avg.~time taken (s) \\
\midrule
\multicolumn{4}{c|}{Coarse mesh} \\
\midrule
\labelcref{dp:s2} 1 CPU & 6 & 1664.02 & 277.33\\
\labelcref{dp:s2} 4 CPUs & 44 & 4425.71 &100.58\\
\labelcref{dp:s2} 8 CPUs & 37 & 2310.22 &62.44\\
\labelcref{dp:s2} 16 CPUs & 42 & 1844.64& 43.92\\
\labelcref{dp:s2} 32 CPUs & 6 & 208.64 & 34.77\\
\labelcref{dp:s3} 32 CPUs & 7 & 2150.24 & 307.18 \\
\midrule
\multicolumn{4}{c|}{Fine mesh} \\
\midrule
\labelcref{dp:s3} 32 CPUs & 41 & 53,356.56 & 1301.38 \\
\bottomrule
\end{tabular}
\caption{Time measurements of the linear solves of the BM system. On the coarse mesh we measure the number of BM iterations and overall time taken to find the first branch at $\mu=200$. On the fine mesh we measure the number of BM iterations and overall time taken to grid-sequence the first solution starting at $\mu=10^{-5}$.} 
\label{tab:five-holes-quad-pipe-timings}
\end{table}

\begin{figure}[ht]
\centering
\subfloat[Branch 0]{\includegraphics[width = 0.32\textwidth]{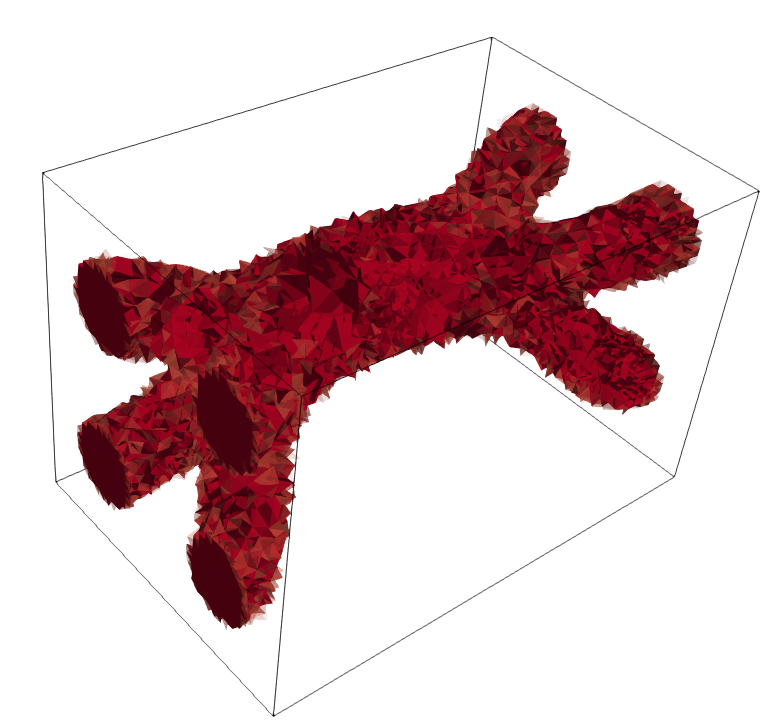}}
\subfloat[Branch 1]{\includegraphics[width = 0.32\textwidth]{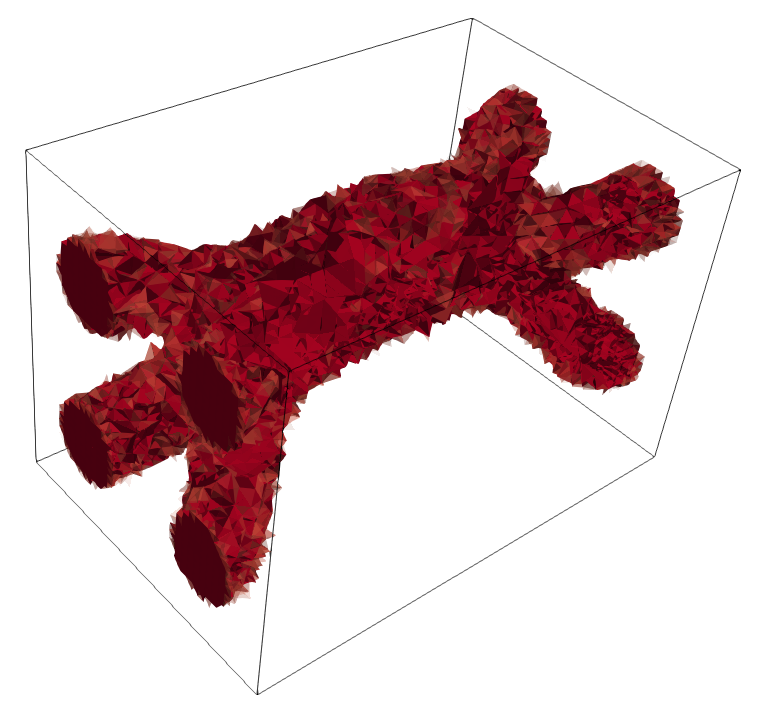}}
\subfloat[Branch 2]{\includegraphics[width = 0.32\textwidth]{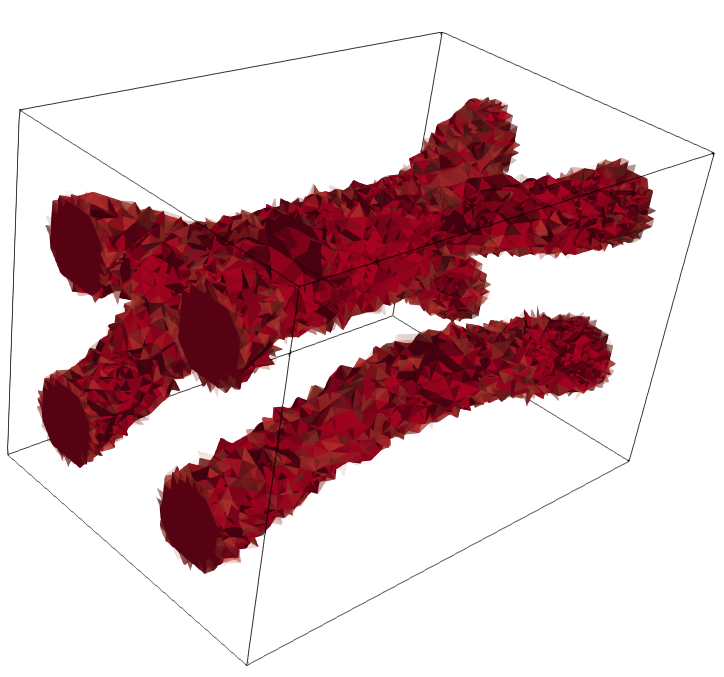}}\\
\subfloat[Branch 3]{\includegraphics[width = 0.32\textwidth]{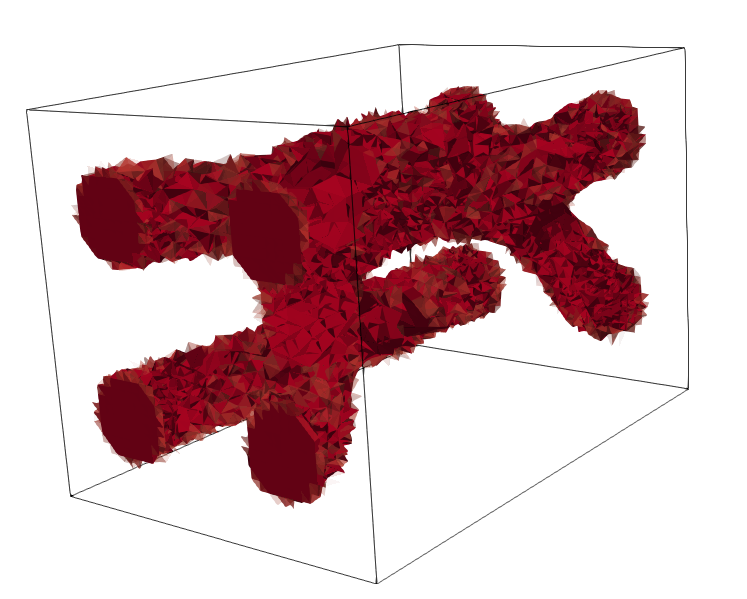}}
\subfloat[Branch 7]{\includegraphics[width = 0.32\textwidth]{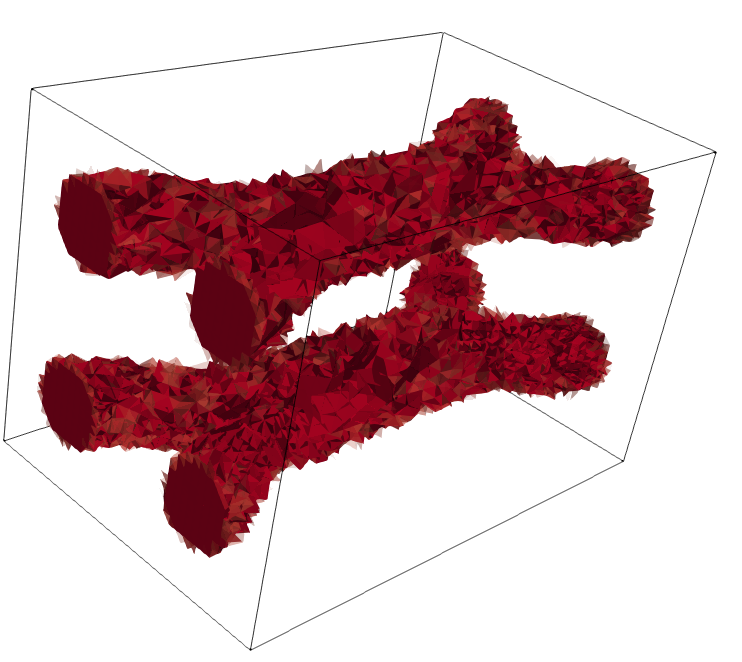}}
\subfloat[Branch 11]{\includegraphics[width = 0.32\textwidth]{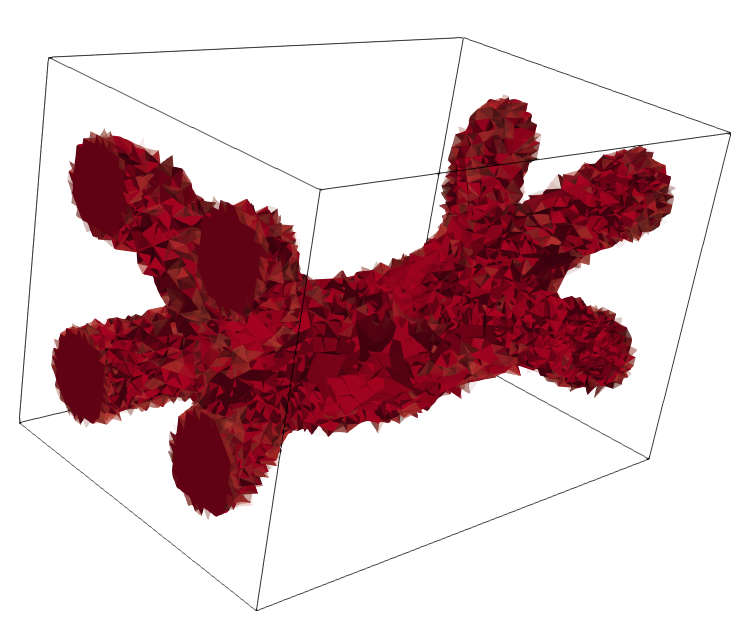}}\\
\subfloat[Branch 13]{\includegraphics[width = 0.32\textwidth]{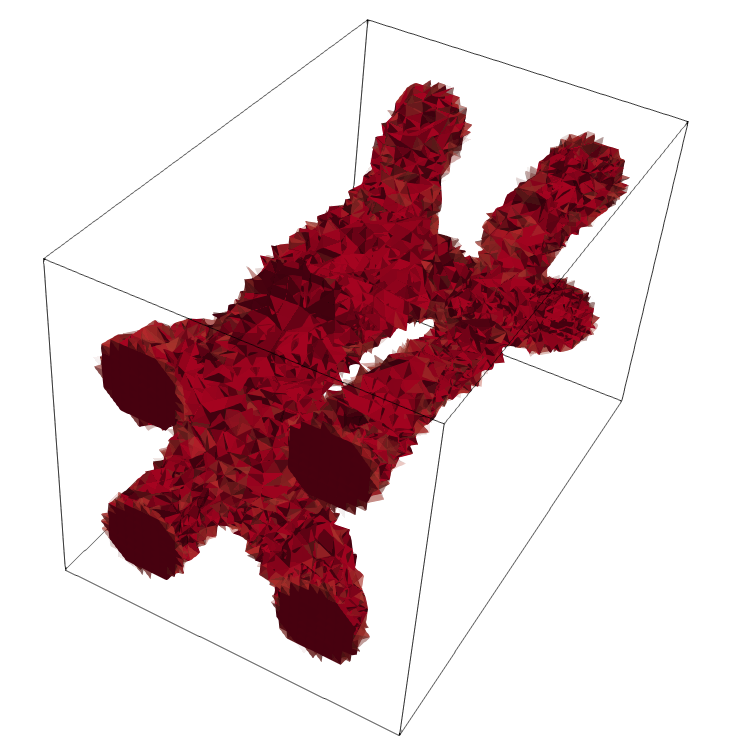}}
\caption{The material distribution of seven solutions to the 3D fives-holes quadruple-pipe optimization problem discretized with 2,014,113 degrees of freedom as discovered by the deflated barrier method and grid-sequenced once. The resulting power dissipation values for branches 0, 1, 2, 3, 7, 11, and 13 are $J_h = 55.02, 54.73, 62.27,  62.22, 59.56, 55.28$, and $62.78$, respectively.}
\label{fig:3d-5-holes-1}
\end{figure}
\begin{figure}[ht]
\centering
\subfloat[Branch 0]{\includegraphics[width = 0.23\textwidth]{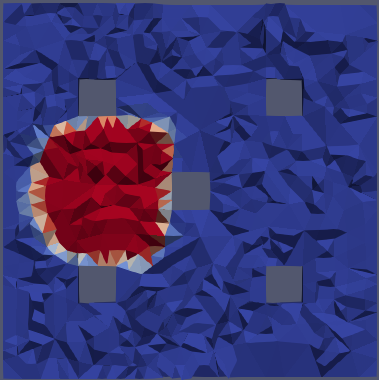}}\;
\subfloat[Branch 1]{\includegraphics[width = 0.23\textwidth]{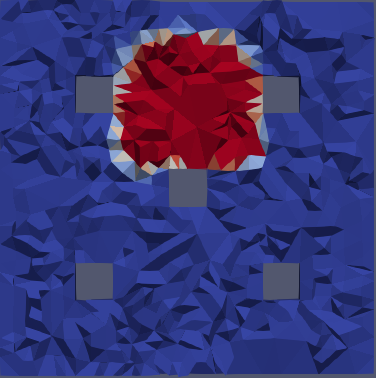}}\;
\subfloat[Branch 2]{\includegraphics[width = 0.23\textwidth]{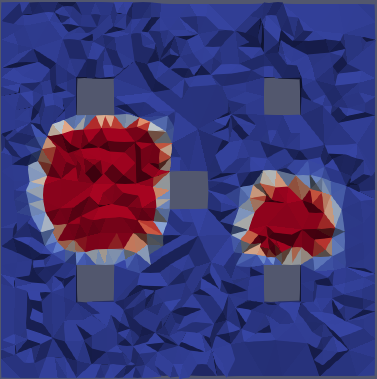}}\;
\subfloat[Branch 3]{\includegraphics[width = 0.23\textwidth]{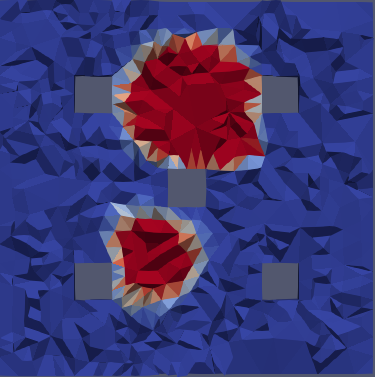}}\\
\subfloat[Branch 7]{\includegraphics[width = 0.23\textwidth]{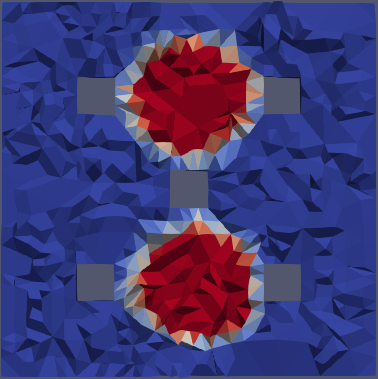}}\;
\subfloat[Branch 11]{\includegraphics[width = 0.23\textwidth]{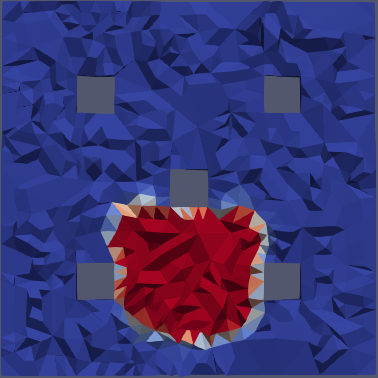}}\;
\subfloat[Branch 13]{\includegraphics[width = 0.23\textwidth]{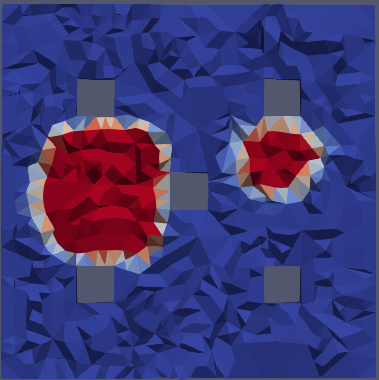}}
\caption{The crinkled cross sections at $x=3/4$ for the discovered solutions of the 3D five-holes quadruple-pipe. The grey regions are part of the five cuboid holes in the box domain. The material distribution has a value of one in the red regions and zero in the blue regions, with intermediate values for the intermediate coloured regions.}
\label{fig:3d-5-holes-slice}
\end{figure}
\begin{figure}[ht]
\centering
\includegraphics[width = 0.6\textwidth]{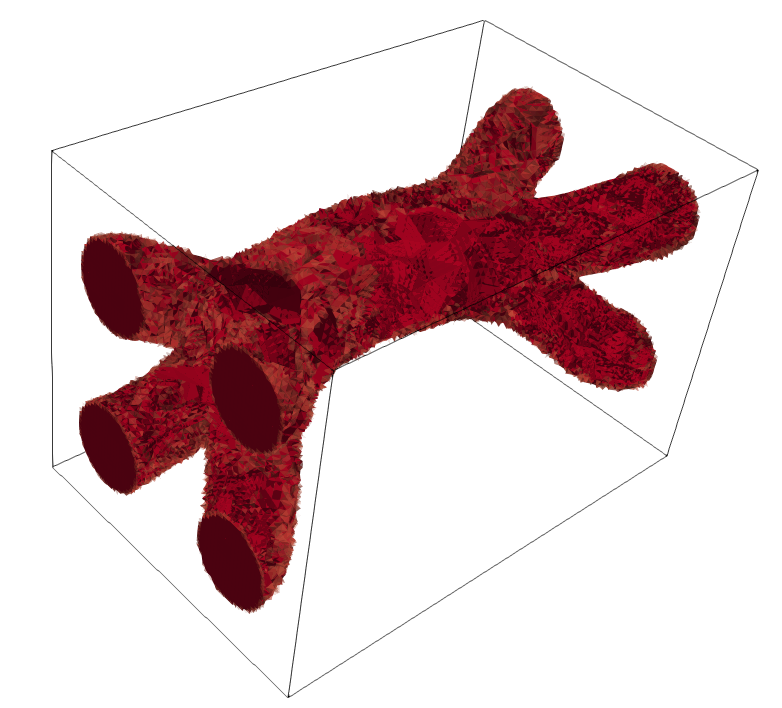}
\caption{The material distribution of branch 0 after grid-sequencing to a second uniform mesh refinement resulting in 15,953,537 degrees of freedom ($J_h = 39.11$). The augmented Lagrangian 3-grid multigrid preconditioner \labelcref{dp:s3} was used.}
\label{fig:3d-5-holes-2}
\end{figure}

\textbf{Code availability:} For reproducibility, the implementation of the deflated barrier method used in this work, as well as scripts to generate the solutions, can be found at \url{https://github.com/ioannisPApapadopoulos/fir3dab/}. The version of the software used in this paper is archived on Zenodo \cite{fir3dab}. 

\newpage 

\section{Conclusions}
In this work we extended the application of the deflated barrier method to discovering multiple three-dimensional solutions of the fluid topology optimization model of Borrvall and Petersson \cite{Borrvall2003}. This was achieved by developing preconditioners for the linear systems that arise in the deflated barrier method. The preconditioning strategy reduces the discretized $4 \times 4$-block matrix solve to an outer FGMRES method and the solve of one diagonal matrix, one block-diagonal matrix (that can be factorized once or quickly solved with multigrid), and an augmented momentum block. Moreover, we develop a geometric multigrid cycle for the augmented momentum block that consists of a specialized relaxation method and a characterization of the active set on coarser levels. We found that the preconditioner is robust to the mesh size and successfully computed three solutions of a 3D cross-channel problem and seven solutions of a 3D five-holes quadruple-pipe problem.


\clearpage
\bibliographystyle{siamplain}
\bibliography{references}
\end{document}